\newcounter{mparcnt}
\newtheorem{theorem}{Theorem}[section]%[section]
\newtheorem{lemma}[theorem]{Lemma}%[section]
\newtheorem{proposition}[theorem]{Proposition}%[section]
\newtheorem{definition}[theorem]{Definition}%[section]
\newtheorem{remark}[theorem]{Remark}
\newtheorem{example}[theorem]{Example}
\def\p{\partial}
\def\p{\partial}
\def\<{\langle}
\def\>{\rangle}
\def\ep{\epsilon}
\def\Om{\Omega}
\def\p{\partial}
\newcommand{\mfH}{\mathbf{H}}
\newcommand{\mfM}{\mathbf{M}}
\newcommand{\mfR}{\mathbf{R}}
\newcommand{\mfV}{\mathbf{V}}
\newcommand{\mfRV}{\mathbf{R}\mathbf{V}}
\newcommand{\mcH}{\mathcal{H}}
\newcommand{\mcL}{\mathcal{L}}
\newcommand{\mcV}{\mathcal{V}}
\newcommand{\mcRV}{\mathcal{R}\mathcal{V}}
\newcommand{\ra}{\rightarrow}
\newcommand{\rd}{{\rm d}}
\newcommand{\vvert}{{\vert\vert}}
\providecommand{\norm}[1]{\lVert#1\rVert}
\newcommand{\eq}[1]{\begin{equation}\begin{alignedat}{2} #1 \end{alignedat}\end{equation}}
\numberwithin{equation} {section}
\begin{document}
	
	\title[Boundary Maximum Principle]{A boundary maximum principle for stationary pairs of varifolds with fixed contact angle}
	
	\author[Zhang]{Xuwen Zhang}
	\address[X.Z]{School of Mathematical Sciences\\
		Xiamen University\\
		361005, Xiamen, P.R. China
		%\newline\indent\&
		\newline\indent Institut f\"ur Mathematik, Goethe-Universit\"at, %Robert-Mayer-Str. 10,
		60325, Frankfurt, Germany}
	\email{zhang@math.uni-frankfurt.de}
		%\thanks{This work is  supported by xxx.
		%	}
	
	\begin{abstract}
	{\color{black}In this note, we establish a boundary maximum principle for a class of stationary pairs of varifolds satisfying a fixed contact angle condition in any compact Riemannian manifold with smooth boundary}.	
		\
		
		\noindent {\bf MSC 2020:} 53C42, 49Q15.\\
		{\bf Keywords:}   boundary maximum principle, varifolds,
		contact angle condition.\\
		
	\end{abstract}
	
	\maketitle
	
	%\medskip
	
%%%%%%%%%%%%%%%%%%%%%%%%%%%%%%%%%%%%%%%%%%%%%%%%%%%%%%%%%

\section{Introduction}\label{Sec-1}
Minimal surfaces—critical points of the area functional with respect to local deformations—are fundamental objects in Riemannian geometry, and attracted the attention of many mathematicians. In this note, we establish a boundary maximum principle for the generalized minimal hypersurfaces in any Riemannian manifolds, having constant contact angle $\theta_0$ with the boundary.

In all follows, let $(N^\ast,g)$ be a smooth, connected, compact $(n+1)$-dimensional Riemannian manifold with non-empty boundary $\p N^\ast$.
With a slight abuse of notation, we also use $\<\cdot,\cdot\>$ to denote the Riemannian metric of $N^\ast$ and denote by $\nabla$ the Levi-Civita connection of $N^\ast$.
Let $\nu_{N^\ast}$ denote its unit normal along $\p N^\ast$, pointing into $N^\ast$.
For any smooth, compact, properly embedded hypersurface $S\subset N^\ast$
whose boundary lies in $\p N^\ast$, fix an orientation given by the unit normal vector field $\nu_S$, and
let $\Omega$ be the closure of the enclosed region of $S$ with $\p N^\ast$ such that $\nu_S$ points inside $\Omega$, set $T=\p\Omega\cap\p N^\ast$.
See Fig. \ref{Fig-1} for illustration, where $\mu,\bar\mu$ denote the inwards pointing unit conormals of $S\cap T$ in $S$ and $T$ respectively.

%Let $\nu_S$ denote its unit normal, pointing into $\Omega$.
Let $A^S$ denote the shape operator of $S$ in $N^\ast$ with respect to $\nu_S$, i.e., $A^S(u)=-\nabla_u\nu_S$ for any $u\in\Gamma(TS)$. We say that $S$ is strongly mean convex at a point $p\in S$, if
\eq{
    \kappa_1+\ldots+\kappa_n>0,
}
where $\kappa_1\leq\ldots\leq\kappa_n$ are the principal curvatures of $A^S$ at $p$.
%%%%%%%%%%%%%%%
\begin{figure}[h]
	\centering
	\includegraphics[height=7cm,width=11cm]{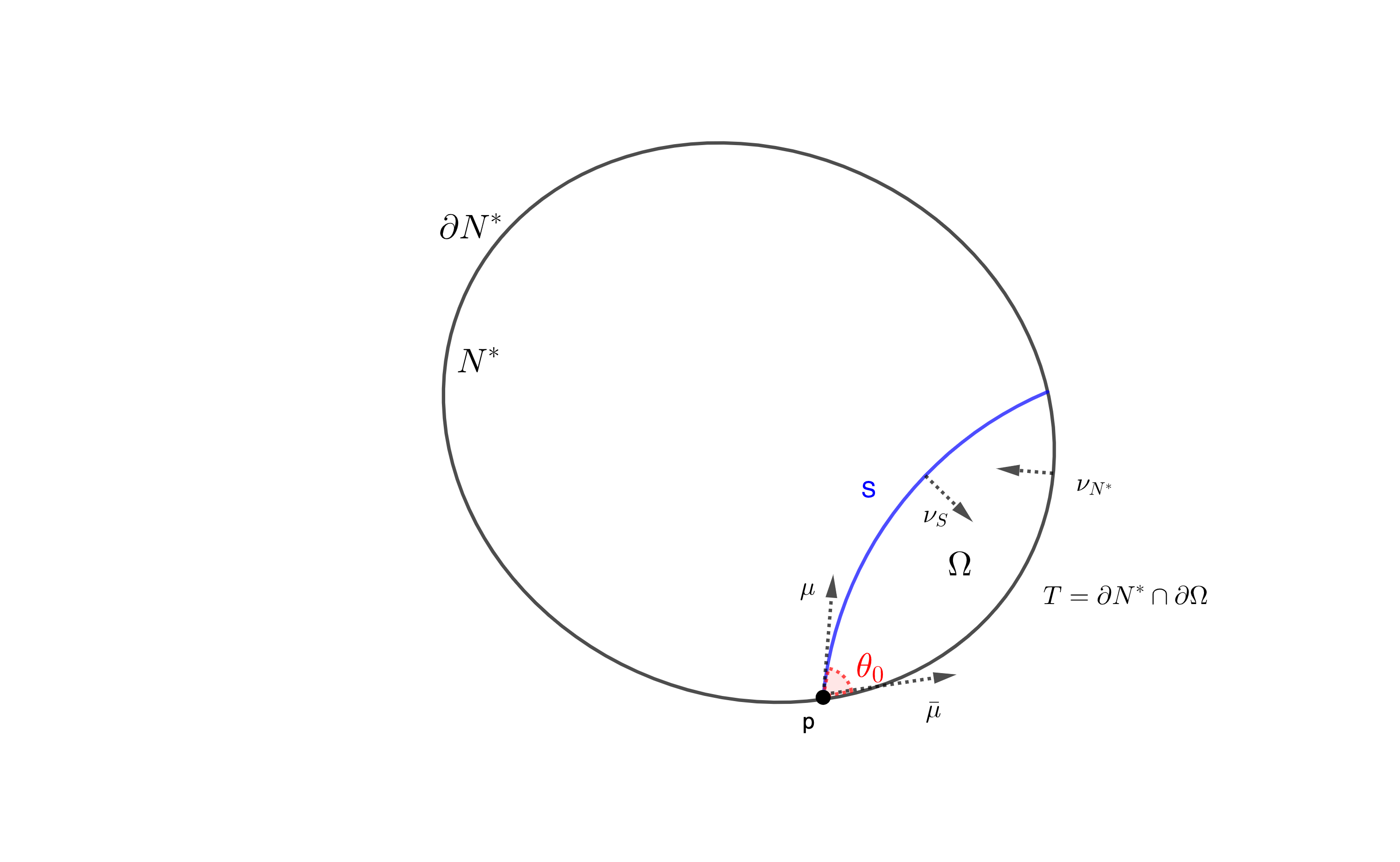}
	\caption{Hypersurface $S$ having constant contact angle $\theta_0$ with the boundary $\p N^\ast$.}
	\label{Fig-1}
\end{figure}
%%%%%%%%%%%%%%%

Our main result is the following boundary maximum principle, established in the context of varifolds with fixed contact angle, we refer to Section \ref{Sec-2} for the precise definition and statement.
\subsection{Main Result}
\begin{theorem}\label{Thm-MP-FixedContactAngle}
    Given $\theta_0\in(0,\pi/2)$,
    let S be a smooth, compact, properly embedded hypersurface, meeting $\p N^\ast$ with a constant angle $\theta_0$, that is, $\left<\nu_S,\nu_{N^\ast}\right>=-\cos\theta_0$ along $\p N^\ast\cap S$. Suppose $S$ is strongly mean convex at a point $p\in\p S$.
    
    Then, for any fine $\theta_0$-stationary pair $(V,W)\in\mcV_n(N^\ast)\times\mcV_n(\p N^\ast)$,
    $p$ is not contained in the support of $\vert\vert V\vert\vert$, if one of the following cases happens:\footnote{As pointed out to us by a referee, the following two conditions are sufficient also if they are locally true at $p$.}
    \begin{enumerate}[i.]
        \item $\vvert V\vvert$ is supported in $\Om$, $\vvert W\vvert$ is supported in $T$, and $\vvert V\vvert(\p N^\ast)=0$; %--------
        \item
        $\vvert V\vvert$ is supported in $\Om$, $\vvert W\vvert$ is supported in $T$, and
        $\p N^\ast$ is mean convex in $N^\ast$.
    \end{enumerate}
\end{theorem}

The maximum principle for minimal submanifolds has been proved in various context. The interior maximum principle for $C^2$-hypersurfaces is a direct consequence of the well-known Hopf's boundary point lemma \cite[Lemma 3.4]{GT01}.
It is then generalized to arbitrary codimension by Jorge-Tomi \cite{JT03}.
In the non-smooth case, White \cite{White10} established the interior maximum principle in the context of minimal varieties, in any codimension.
Recently, Li-Zhou generalized the main result of \cite{White10} to the free boundary setting, they established a boundary maximum principle for free boundary minimal varieties (free boundary stationary varifolds), in arbitrary codimension (\cite[Theorem 1.2]{LZ21}).

As argued in \cite{LZ21}, in the smooth, codimension-1 case, the boundary maximum principle for free boundary hypersurface amounts to be a simple application of Hopf's lemma. 
{\color{black}Meanwhile, one can also derive a boundary maximum principle for a generic contact angle $\theta_0\in(0,\pi)$ by virtue of Hopf’s lemma, see e.g. \cite[Lemma 1.13]{LZZ21}.
Therefore our main result Theorem \ref{Thm-MP-FixedContactAngle} serves as a generalization
of this classical result and of course as an extension of \cite[Theorem 1.2]{LZ21} from $\theta_0=\pi/2$ to $\theta_0\in(0,\pi/2)$.}
%one can also derive a boundary maximum principle for a generic contact angle $\theta_0\in(0,\pi)$ by virtue of Hopf's lemma, see e.g., \cite[Lemma 1.13]{LZZ21}.
%%%%%%%%%%%%%%%
\begin{figure}[h]
	\centering
	\includegraphics[height=7cm,width=11cm]{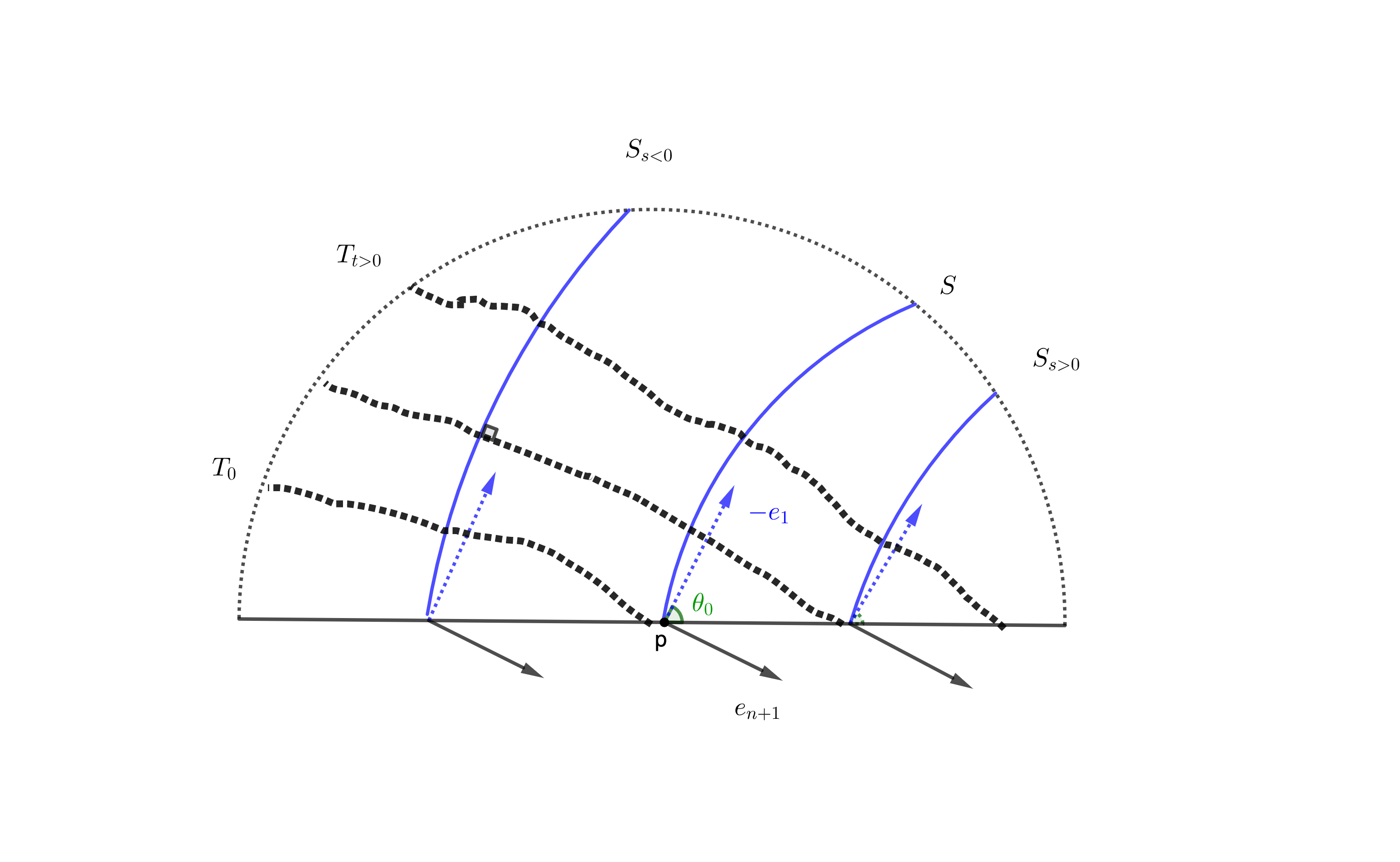}
	\caption{Local orthogonal foliations and orthonormal frame.}
	\label{Fig-2}
\end{figure}
%%%%%%%%%%%%%%%

Our strategy of proof follows largely from \cite{LZ21}.
In the free boundary case ($S$ meets $\p N^\ast$ orthogonally), Li-Zhou managed to prove their main result by a contradiction argument. Precisely, they constructed a test vector field $X$, which strictly decreases the first variation of the free boundary stationary varifold.
To construct such $X$, they first constructed local orthogonal foliations near $p$ (\cite[Lemma 2.1]{LZ21}).
By virtue of such foliations, they found a local orthonormal frame ${e_1,\ldots,e_{n+1}}$ of $N^\ast$ near $p$, see \cite[Figure 2]{LZ21} also Fig. \ref{Fig-2} for illustration. The key point is that, locally near the free boundary, for any $q\in\p N^\ast$, there holds $e_{n+1}(q)\in T_q\p N^\ast$, which motivates their choice of test vector field $X$.
{\color{black}In our case, intuitively we would like to seek some tangential variation $X$ to test the first variation for $\theta$-stationary pair of varifolds (as in Definition \ref{Defn-contactangle}).
However, we only managed to prove a weaker boundary maximum principle in a preliminary version of this manuscript \cite[Theorem 1.1]{Zhang22}.
In fact, the strong maximum principle does not hold for a general stationary pair of varifolds (see Examples \ref{Exam-counterEx}, \ref{Exam-A1}).
Motivated by this, we introduce a new class of pairs of varifolds, called \textit{fine $\theta$-stationary pair} (defined in Definition \ref{Defn-finepair}), which allows us to test not only the tangential (with respect to $\p N^\ast$) variation but also the normal one (with respect to $S$), as what Li-Zhou have done in \cite{LZ21}.
Testing the stationarity of the fine pair $(V,W)$ with normal variation,
we find: the strictly mean convexity of $S$ at $p$ forces $X$ to increase the normal variation of $V$, which violates the stationarity of the fine pair $(V,W)$.}
%Unfortunately, in case {\bf ii.} of \cref{Thm-MP-capillary}, our approach seems to work only for half of the angles, precisely, $\theta_0\in[\pi/2,\pi)$.
\subsection{Organization of the Paper} In Section \ref{Sec-2}, we briefly recall some definitions from geometric measure theory and give a precise definition of the fine $\theta$-stationary pair of varifolds.
In Section \ref{Sec-3}, we prove our main result Theorem \ref{Thm-MP-FixedContactAngle}.

%--------------
\subsection{Acknowledgements}

The author would like to thank the anonymous referees for many valuable comments. A special thanks goes to Gaoming Wang, for the counter example (Example \ref{Exam-A1}) shown to the author by him when a preliminary version of this manuscript is under review, which somewhat motivated the definition of finte $\theta$-pair. 

%==============

\section{Varifolds with Fixed Contact Angle}\label{Sec-2}

Let us begin by recalling some basic concepts of varifolds, we refer to \cite[Chapter 8]{Allard72,Pitts81,Sim83} for detailed accounts.
\subsection{Varifolds}
The space of \textit{$n$-varifolds} in $\mfR^{L}$, denoted by $\mfV_n(\mfR^{L})$, is the set of all positive Radon measures on the Grassmannian $\mfR^{L}\times G(L,n)$ equipped with the weak topology. The \textit{weight} and \textit{mass} of a varifolds $V\in\mfV_n(\mfR^{L})$ is denoted respectively by $\vvert V\vvert$ and $\mfM(V):=\vvert V\vvert(\mfR^{L})$. For any Borel set $A\subset\mfR^{L}$, we denote by $V\llcorner A$ the \textit{restriction} of $V$ to $A\times G(L,n)$. The \textit{support} of $V$, ${\rm spt}\vvert V\vvert$, is the smallest closed subset $B\subset\mfR^{L}$ such that $V\llcorner(\mfR^{L}\setminus B)=0$.
For any $C^1$ map $f:\mfR^{L}\ra\mfR^{L}$, the continuous \textit{pushforward} map $f_\#:\mfV_n(\mfR^{L})\ra\mfV_n(\mfR^{L})$ is defined as in \cite[2.1(18)(h)]{Pitts81}.
We denote by $\mfRV_n(\mfR^{L})$ the set of \textit{rectifiable $n$-varifolds} in $\mfR^{L}$, see \cite[2.1(18)(d)]{Pitts81}.

%-------------
Let us proceed and define varifolds in $N^\ast$, by virtue of the Nash embedding theorem, we can assume that $N^\ast$ is isometrically embedded as a closed subset of some $\mfR^L$.
Here we follow the notations in \cite{Pitts81} (which is slightly different from \cite{Allard72}, see also \cite[Section 2.2]{LZ21-a}).
Since $N^\ast$ is a submanifold of class $1$ of $\mfR^L$, we define the Grassmann bundle by $G_n(N^\ast)=\left(N^\ast\times G(L,n)\right)\cap\{(x,P):P\subset T_xN^\ast\}$; $G_n(\p N^\ast)$ is understood in the same way, and since $\p N^\ast$ is $n$-dimensional, we have $G_n(\p N^\ast)=\{(x,T_x\p N^\ast):x\in\p N^\ast\}$.

In this note,
we mainly work with the following spaces of vector fields,
\eq{
&\mathfrak{X}(\mfR^L)\coloneqq\{\text{the space of }C^1\text{-vector fields on }\mfR^L\},\\
&\mathfrak{X}(N^\ast)\coloneqq\{X\in\mathfrak{X}(\mfR^L):X(p)\in T_pN^\ast\text{ for all }p\in N^\ast\},\\
&\mathfrak{X}_t(N^\ast)\coloneqq\{X\in \mathfrak{X}(N^\ast):X(p)\in T_p(\p N^\ast)\text{ for all }p\in \p N^\ast\}.
}
Notice that at any $p\in\p N^\ast$, $T_pN^\ast$ is exactly the $n$-dimensional half-space in $\mfR^L$ with boundary $T_p(\p N^\ast)$.

We define the space of rectifiable $n$-varifolds in $N^\ast$, denoted by $\mcRV_n(N^\ast)$, to be the set of all rectifiable $n$-varifiolds in $\mfR^L$ with ${\rm spt}\vvert V\vvert\subset N^\ast$.
Moreover, $\mcV_n(N^\ast)$ is defined to be the closure, in the weak topology, of $\mcRV_n(N^\ast)$.
Note that in general, $\mcV_n(N^\ast)$ is a proper subset of $\mfV_n(\mfR^L)\cap\{V:{\rm spt}\vvert V\vvert\subset N^\ast\}$, and in fact, if $V\in\mcV_n(N^\ast)$, then one has (see \cite[2.1(18)(g)]{Pitts81})
\eq{
    V(G_n(\mfR^L)\setminus G_n(N^\ast))=0.
}
Similar property holds for those $W\in\mcV_n(\p N^\ast)$.

Let $V\in\mcV_n(N^\ast)$, if $X\in\mathfrak{X}(N)$ generates a one-parameter family of diffeomorphisms $\phi_t$ of $\mfR^L$ with $\phi_t(N^\ast)\subset N^\ast$ (at a point p on $\partial N^\ast$, one considers the tangent space $T_p N^\ast$ as the half $(n+1)$-space obtained by the blow-up of $N^\ast$ at $p$), then $(\phi_t)_\#V\in\mcV_n(N^\ast)$ and one can consider its first variation along $X$ \cite[(4.2),(4.4)]{Allard72}%\cite[39.2]{Sim83}
\eq{\label{defn-1st-variationformula}
    \delta V[X]\coloneqq\frac{\rd}{\rd t}\mid_{t=0}\mfM((\phi_t)_\# V)
    =\int_{G_n(N^\ast)}{\rm div}_PX(x)\rd V(x,P),
}
here ${\rm div}_PX(x)=D_{e_i}X\cdot e_i$, where $\{e_1,\ldots,e_n\}\subset P$ is any orthonormal basis.
In particular, for $W\in\mcV_n(\p N^\ast)$, we have
\eq{\label{eq-1stvariation-W}
    \delta W[X]
    =\int_{G_n(\p N^\ast)}{\rm div}_PX(x)\rd W(x,P)
    =\int_{\p N^\ast}{\rm div}_{\p N^\ast}X(x)\rd\vvert W\vvert(x).
}
\subsection{Contact Angle Condition for Varifolds}
Let us first introduce the contact angle condition for varifolds, which, to the author's knowledge, was brought up in \cite{KT17} formally, and then extended to a weaker form in \cite{MP21}.

\begin{definition}[Contact angle condition, {\cite[Definition 3.1]{MP21}}]\label{Defn-contactangle}
\normalfont
Given $\theta\in(0,\pi)$, we say that the pair $(V,W)\in\mcV_n(N^\ast)\times\mcV_n(\p N^\ast)$ satisfies the contact angle condition $\theta$, if there exists a $\vert\vert V\vert\vert$-measurable vector field $\mfH\in\mcL^1(N^\ast,\vert\vert V\vert\vert)$ with $\mfH(x)\in T_x\p N^\ast$ for $\norm{V}$-a.e. $x\in\p N^\ast$, such that for every $X\in\mathfrak{X}_t(N^\ast)$, it holds \footnote{We note that our definitions of varifolds are different from that of \cite{DeMasi21,MP21}, where a $k$-varifold on $A\subset\mfR^L$ is defined to be a positive Radon measure on $A\times G(L,k)$.
Here we can rewrite the second term by virute of \eqref{eq-1stvariation-W}.
}
\eq{\label{def-contactangle}
    \delta_{F_{\theta}}(V,W)[X]
    \coloneqq&\int_{G_n(N^\ast)}{\rm div}_PX(x)\rd V(x,P)-\cos\theta\int_{G_n(\p N^\ast)}{\rm div}_PX(x)\rd W(x,P)\\
    =&-\int_{N^\ast}\left<X(x),\mfH(x)\right>\rd\vert\vert V\vert\vert(x).
}
In particular, we say that $(V,W)$ is a $\theta$-stationary pair if in addition, $\mfH=0$ for a.e. $x\in{\rm spt}\vvert V\vvert$.
\end{definition}
%In particular, the first variation formula of $V$ is
%\begin{align}
 %\delta V[X]
 %=&\int_{G_n(N^\ast)}{\rm div}_SX(x)dV(x,S)\notag\\
 %=&-\int_{N^\ast}\left<X(x),\mfH(x)\right>d\vert\vert V\vert\vert(x)
 %+\cos\theta_1\int_{G_n(\p N^\ast)}{\rm div}_{\p N^\ast}X(x)dW(x,T_x\p N^\ast).
%\end{align}
An important proposition \footnote{As noted before, our definitions of varifolds are different from \cite{MP21}.
However, the proof therein, which is based on \cite[Corollary 4.6]{DeMasi21} and \cite[Proposition 3.17]{DeMasi22}, works finely for our definitions.} for the pair of varifolds with fixed contact angle is that they have bounded first variation and satisfies the following first variation formula.
\begin{proposition}[{\cite[Proposition 3.1]{MP21}}]\label{Prop-MP21-3-1}
Given $\theta\in[\pi/2,\pi)$, let $(V,W)\in\mcV_n(N^\ast)\times \mcV_n(\p N^\ast)$ have fixed contact angle $\theta$.
Then $V-\cos\theta W$ has bounded first variation.
More precisely, there exists a positive Radon measure $\sigma_V$ on $\p N^\ast$ and a continuous vector field $\tilde \mfH$, such that
\eq{\label{defn-finepair-1}
        \int_{G_n(N^\ast)}&{\rm div}_PX(x)\rd V(x,P)-\cos\theta\int_{G_n(\p N^\ast)}{\rm div}_PX(x)\rd W(x,P)
        =-\int_{N^\ast}\left<X,\mfH\right>\rd\vvert V\vvert\\
        -&\int_{\p N^\ast}\left<X,\tilde \mfH\right>\rd\left(\vvert V\vvert-\cos\theta\vvert W\vvert\right)+\int_{\p N^\ast}\left<X,-\nu_{N^\ast}\right>\rd\sigma_V,\quad\forall X\in\mathfrak{X}(N^\ast),
}
where $\mfH$ is as in Definition \ref{Defn-contactangle}; $\tilde \mfH$ is the mean curvature of $\p N^\ast$ in $N^\ast$, given by 
\eq{
\tilde \mfH(x)=\nu_{N^\ast}(x){\rm div}_{\p N^\ast}(-\nu_{N^\ast}(x)),\text{ for every }x\in\p N^\ast,
}
and we denote by $\tilde H(x)={\rm div}_{\p N^\ast}(-\nu_{N^\ast}(x))$ so that $\tilde \mfH(x)=\tilde H(x)\nu_{N^\ast}(x)$.
\end{proposition}

If $V$ is a $C^2$-hypersurface in $N^\ast$ (similar with the hypersurface $S$ considered in the introduction), we denote by $\Omega$ the enclosed region of $V$ and $\p N^\ast$, $T:=\p\Omega\cap\p N^\ast$ is the wetting hypersurface with respect to $V$ and $\bar\mu$ is the inwards pointing unit conormal of $V\cap T$ in $T$,
then the first variation of $V$ with respect to the direction $X\in\mathfrak{X}(N^\ast)$ is just
\eq{\label{eq-hypersurface1}
    \delta V(X)
    \coloneqq\frac{\rd}{\rd t
    }\mid_{t=0}{\rm Area}(\psi_t(V))=\int_V{\rm div}_VX\rd\mcH^n\\
    =-\int_V\left<X,H\right>\rd\mcH^n-\int_{\p V}\left<X,\mu\right>\rd\mcH^{n-1},
}
where $\psi_t$ is the flow of $X$ at the time $t$, $H$ is the inwards pointing mean curvature vector of $V$ and $\mu$ is the inwards pointing unit conormal of $\p V$ in $V$.
If $V$ meets $\p N^\ast$ with constant contact angle $\theta$, notice that along $\p N^\ast$, we have $\mu=\cos\theta\bar\mu+\sin\theta\nu_{N^\ast}$ (see Fig. \ref{Fig-1}), we thus obtain
\eq{\label{eq-hypersurface2}
    \int_{\p V}\left<X,\mu\right>\rd\mcH^{n-1}
    =&\cos\theta\int_{\p V}\left<X,\bar\mu\right>\rd\mcH^{n-1}+\sin\theta\int_{\p V}\left<X,\nu_{N^\ast}\right>\rd\mcH^{n-1}\\
    =&{\color{black}-\cos\theta\left(\int_T{\rm div}_{\p N^\ast}X\rd\mcH^n+\int_T\left<X,\tilde H\right>\rd\mcH^n\right)}+\sin\theta\int_{\p V}\left<X,\nu_{N^\ast}\right>\rd\mcH^{n-1},
}
where we used the contact angle condition to derive the first equality and the divergence theorem for the second equality.
In this case,
it is clear that the Radon measure $\sigma_V$ in Proposition \ref{Prop-MP21-3-1} is just $\sin\theta\mcH^{n-1}\llcorner(\p V)$.
On the other hand, for $X\in\mathfrak{X}_t(N^\ast)$, one has
\eq{
    \int_T{\rm div}_{\p N^\ast}X\rd \mcH^n
    =\int_{\p V}\left<X,-\bar\mu\right>\rd\mcH^{n-1}
    =\frac{1}{\sin\theta}\int_{\p N^\ast}\left<X,-\bar\mu\right>\rd \sigma_V.
}
Moreover, consider the set $\p N^\ast\setminus T$, it is easy to see that for $X\in\mathfrak{X}_t(N^\ast)$, there holds
\eq{
    \int_{\p N^\ast\setminus T}{\rm div}_{\p N^\ast}X\rd\mcH^n
    %=\frac{1}{\sin\theta}\int_{\p N^\ast}\left<X,\bar\mu\right>\rd \sigma_V
    =-\int_{\p N^\ast\setminus T}{\rm div}_{\p N^\ast}X\rd\mcH^n.
}

Enlightened by this simple observation, we introduce the following definition that is stronger than Definition \ref{Defn-contactangle}.

\begin{definition}\label{Defn-finepair}
\normalfont
For $\theta\in[\pi/2,\pi)$, let $(V,W)\in\mcV_n(N^\ast)\times \mcV_n(\p N^\ast)$ have fixed contact angle $\theta$ and let $\sigma_V,\mfH,\tilde \mfH$ be as in Proposition \ref{Prop-MP21-3-1}.
We say that $(V,W)$ is a fine $\theta$-pair if there exists $\tilde\mu\in\mcL^1(\p N^\ast,\sigma_V)$ with
$\tilde\mu(x)\in T_x\p N^\ast$ and
$\vert\tilde\mu(x)\vert=1$ for a.e. $x\in{\rm spt}\sigma_V$, such that: for every $X\in\mathfrak{X}_t(N^\ast)$, there holds
\eq{\label{formu-1stvariation-W}
    \int_{G_n(\p N^\ast)}{\rm div}_PX(x)\rd W(x,P)
    =\int_{\p N^\ast}{\rm div}_{\p N^\ast}X\rd\vvert W\vvert
    =\frac{1}{\sin\theta}\int_{\p N^\ast}\left<X,\tilde\mu\right>\rd\sigma_V.
}

For $\theta\in(0,\pi/2)$,
we say that $(V,W)\in\mcV_n(N^\ast)\times \mcV_n(\p N^\ast)$ is a fine $\theta$-pair if there exists $\tilde W\in\mcV_n(\p N^\ast)$ such that
\begin{enumerate}
    \item $(V,\tilde W)$ is a fine $(\pi-\theta)$-pair (in this case $\pi-\theta\in(\pi/2,\pi)$, let $\sigma_V,\mfH,\tilde\mfH,\tilde\mu$ be the resulting notations);
    %as in the case $\theta\in[\pi/2,\pi)$
    \item For any $X\in\mathfrak{X}_t(N^\ast)$, there holds
\eq{\label{formu-1stvariation-W^c}
        \int_{\p N^\ast}{\rm div}_{\p N^\ast}X\rd\vvert W\vvert
        =-\int_{\p N^\ast}{\rm div}_{\p N^\ast}X\rd\vvert \tilde W\vvert.
        %=-\frac{1}{\sin\theta}\int_{\p N^\ast}\left<X,\tilde\mu\right>\rd\sigma_V.
}
\end{enumerate}

In particular, we say that $(V,W)$ is a fine $\theta$-stationary pair if in addition, $\mfH=0$ for a.e. $x\in{\rm spt}\vvert V\vvert$.
\end{definition}

Now we consider the case $\theta\in(0,\pi/2)$ and $(V,W)$ is a fine $\theta$-pair, by definition there exists $\tilde W\in\mcV_n(\p N^\ast)$ such that $(V,\tilde W)$ is a fine $(\pi-\theta)$-pair, and from \eqref{formu-1stvariation-W^c}, \eqref{formu-1stvariation-W}, we see that for any $X\in\mathfrak{X}_t(N^\ast)$,
\eq{\label{formu-1stvariation-W'}
        \int_{\p N^\ast}{\rm div}_{\p N^\ast}X\rd\vvert W\vvert
        =-\int_{\p N^\ast}{\rm div}_{\p N^\ast}X\rd\vvert \tilde W\vvert
        =&-\frac{1}{\sin(\pi-\theta)}\int_{\p N^\ast}\left<X,\tilde\mu\right>\rd\sigma_V\\
        =&\frac{1}{\sin\theta}\int_{\p N^\ast}\left<X,-\tilde\mu\right>\rd\sigma_V.
}
Since $\p N^\ast$ is a smooth hypersurface in $N^\ast$, a standard computation then gives that:
%if $(V,W)$ is a fine $\theta$-pair, then
for any $X\in\mathfrak{X}(N^\ast)$,
\eq{\label{defn-finepair-2}
     \int_{\p N^\ast}{\rm div}_{\p N^\ast}X\rd\vvert \tilde W\vvert
     =&\int_{\p N^\ast}{\rm div}_{\p N^\ast}(X^T+X^\perp)\rd\vvert\tilde W\vvert\\
     =&\frac{1}{\sin\theta}\int_{\p N^\ast}\left<X,\tilde\mu\right>\rd\sigma_V-\int_{\p N^\ast}\left<X,\tilde \mfH\right>\rd\vvert \tilde W\vvert,
}
%\begin{align}\label{defn-finepair-2}
    %\int_{\p N^\ast}{\rm div}_{\p N^\ast}X\rd\vvert W\vvert
    %=&\int_{\p N^\ast}{\rm div}_{\p N^\ast}(X^T+X^\perp)\rd\vvert W\vvert\notag\\
    %=&\frac{1}{\sin\theta}\int_{\p N^\ast}\left<X,-\tilde\mu\right>\rd\sigma_V-\int_{\p N^\ast}\left<X,\tilde \mfH\right>\rd\vvert W\vvert,
%\end{align}
here the tangential and normal part of $X$ are stated with respect to $\p N^\ast$, to derive the second equality we have used \eqref{formu-1stvariation-W} for $\tilde W$ (notice that $\sin(\pi-\theta)=\sin\theta$) %\eqref{formu-1stvariation-W'}
and the fact that $\tilde\mu(x)\in T_x\p N^\ast$ for a.e. $x\in{\rm spt}\sigma_V$.
%\begin{align}
     %\int_{\p N^\ast}{\rm div}_{\p N^\ast}X\rd\vvert \tilde W\vvert
     %=\frac{1}{\sin\theta}\int_{\p N^\ast}\left<X,\tilde\mu\right>\rd\sigma_V-\int_{\p N^\ast}\left<X,\tilde \mfH\right>\rd\vvert \tilde W\vvert,
%\end{align}
%where we have used \eqref{formu-1stvariation-W} for $\tilde W$ (notice that $\sin(\pi-\theta)=\sin\theta$).

If in addition, the fine $\theta$-pair $(V,W)$ is stationary, then \eqref{defn-finepair-1} yields
\eq{
    &\int_{G_n(N^\ast)}{\rm div}_PX(x)\rd V(x,P)-\cos(\pi-\theta)\int_{\p N^\ast}{\rm div}_{\p N^\ast}X\rd\vvert\tilde W\vvert\\
    =&-\int_{\p N^\ast}\left<X,\tilde \mfH\right>\rd\left(\vvert V\vvert-\cos(\pi-\theta)\vvert\tilde W\vvert\right)+\int_{\p N^\ast}\left<X,-\nu_{N^\ast}\right>\rd\sigma_V,
}
taking \eqref{defn-finepair-2} into account, this reads
\eq{\label{defn-finepair-3}
    &\int_{G_n(N^\ast)}{\rm div}_PX(x)\rd V(x,P)\\
    =&\frac{\cos\theta}{\sin\theta}\int_{\p N^\ast}\left<X,-\tilde\mu\right>\rd\sigma_V-\int_{\p N^\ast}\tilde H\left<X,\nu_{N^\ast}\right>\rd\vvert V\vvert+\int_{\p N^\ast}\left<X,-\nu_{N^\ast}\right>\rd\sigma_V.
}

\begin{remark}
\normalfont
In Definition \ref{Defn-finepair}, for the case $\theta\in(0,\pi/2)$ we require the existence of $\tilde W\in\mcV_n(\p N^\ast)$ with desired properties.
Note that these properties clearly hold when $W$ is the naturally induced rectifiable varifold of a smooth compact domain $T\subset\p N^\ast$, since we can simply take $\tilde W$ to be the naturally induced rectifiable varifold of $\p N^\ast\setminus T$.
The statement here remains true if $T$ is a Caccioppoli set (set of finite perimeter) in $\p N^\ast$.
\end{remark}
\begin{remark}\label{Rem-1st-variation}
\normalfont
In Theorem \ref{Thm-MP-FixedContactAngle}, we consider the cases when $V$ is supported in $\Omega$ and $W$ is supported in $T=\p\Omega\cap\p N^\ast$.
Thanks to \eqref{formu-1stvariation-W'}, we know that as a positive Radon measure on $\p N^\ast$, $\sigma_V$ is supported on $T$.
Therefore when testing the first variation \eqref{defn-finepair-3}, it suffice to consider the behavior of $X$ on $\Omega$.
\end{remark}

We end this section by giving an example of a pair of varifolds that satisfies the contact angle condition but violates the conclusion of Theorem \ref{Thm-MP-FixedContactAngle}, revealing the necessity of defining \textit{fine $\theta$-pair}.
The construction in the following example is inspired by another example (Example \ref{Exam-A1}) shown to the author by Gaoming Wang when a preliminary version of this manuscript is reviewed and somewhat becomes the major motivation of this paper and the strong maximum principle for pairs of stationary rectifiable cones derived in \cite[Lemma 2.16]{XZ23}.
\begin{example}\label{Exam-counterEx}
Let $N^\ast$ be the unit ball in the plane $\mfR^2$ centered at the origin, $\p N^\ast$ is then the unit sphere.
Let $p_1,p_2$ be two points on $\p N^\ast$ such that the line segment joining them, say $L$, has contact angle $\frac\pi3$ with $\p N^\ast$.
Let $S$ be a mean convex curve joining $p_1$ and $p_2$ in $N^\ast$ to enclose a domain $\Om$ so that $S$ has a contact angle $\theta_0=\arccos\frac14$ with $\p N^\ast$.
Let $V$ be the naturally induced multiplicity 1 varifold by $L$, let $W$ be the multiplicity 2 varifold induced by $\p\Om\cap\p N^\ast$.
See Fig. \ref{Fig-example}.

Then $(V,W)$ is a $\theta_0$-stationary pair but not a fine $\theta_0$-stationary pair, with $\vvert V\vvert$ supported in $\Om$.
\end{example}

%-------------
\begin{figure}[H]
	\centering
	\includegraphics[height=8cm,width=15cm]{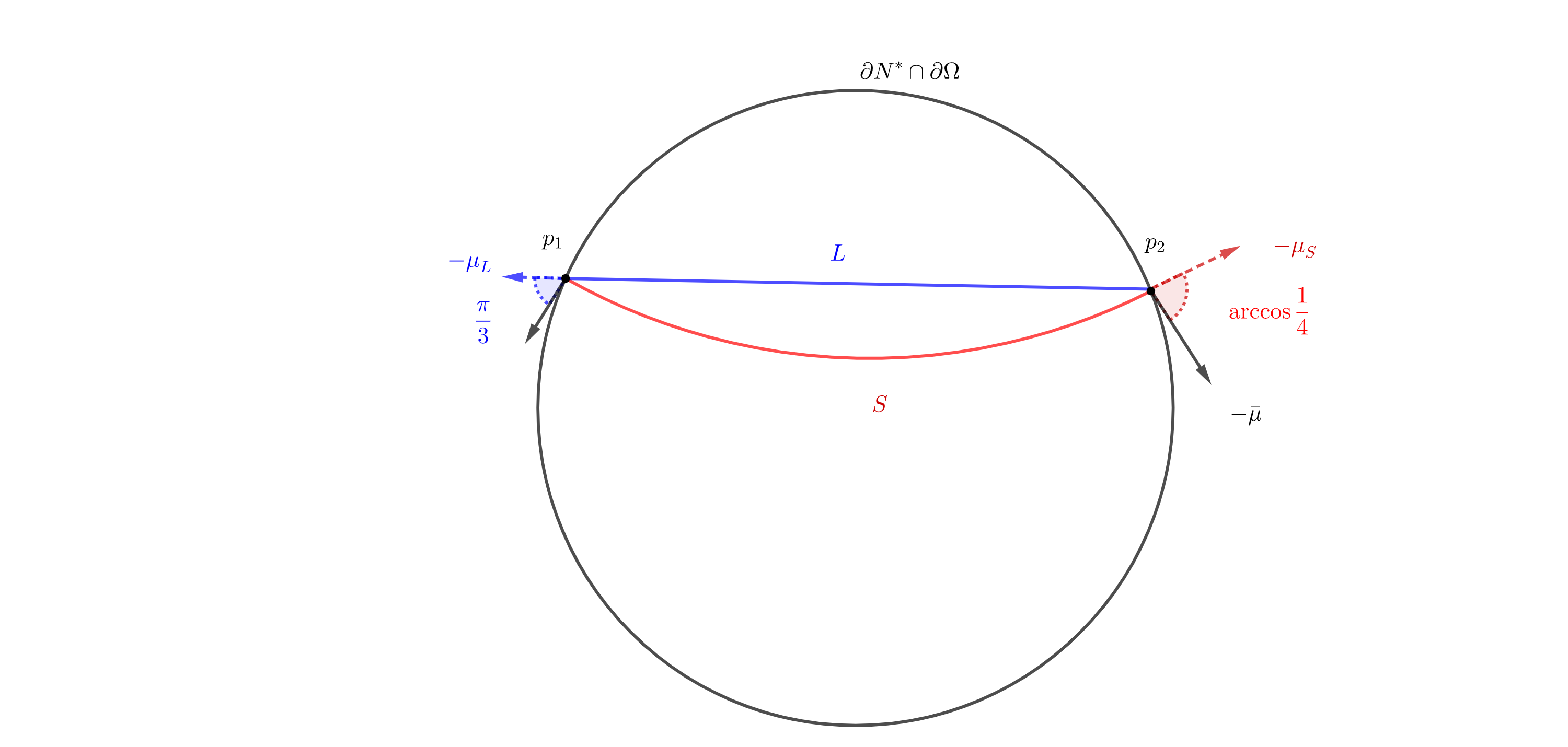}
	\caption{Example 2.6.}
	\label{Fig-example}
\end{figure}
%-------------

\begin{proof}
A direct computation shows that: for any $X\in\mathfrak{X}(N^\ast)$,
\eq{\label{eq-Exam-1}
    \int_{G_1(N^\ast)}{\rm div}_PX(x)\rd V(x,P)
    =&\int_L{\rm div}_LX(x)\rd\mcH^1(x)
    =\left<X(p_1),-\mu_L(p_1)\right>+\left<X(p_2),-\mu_L(p_2)\right>\\
    =&\frac12\sum_{i=1}^2\left<X(p_i),-\bar\mu(p_i)\right>+\frac{\sqrt{3}}{2}\sum_{i=1}^2\left<X(p_i),-\nu_{N^\ast}(p_i)\right>,%\\
    %=&\frac14\int_{\p\Om\cap\p N^\ast}2{\rm div}_{\p N^\ast}X(x)\rd\mcH^1(x)\\
    %=&\cos\theta_0\int_{G_1(\p N^\ast)}{\rm div}_PX(x)\rd W(x,P),
}
where we have used the fact that $-\mu_L(p_i)=\cos\frac{\pi}{3}(-\bar\mu(p_i))+\sin\frac{\pi}{3}(-\nu_{N^\ast}(p_i))$.

Similarly, one has
\eq{\label{eq-Exam-2}
    \int_{G_1(\p N^\ast)}{\rm div}_PX(x)\rd W(x,P)
    =&\int_{\p N^\ast\cap\p\Om}2{\rm div}_{\p N^\ast}X(x)\rd\mcH^1(x)\\
    =&2\sum_{i=1}^2\left<X(p_i),-\bar\mu(p_i)\right>-\int_{\p N^\ast\cap\p\Om}\left<X(x),\tilde \mfH(x)\right>\rd\vvert W\vvert(x).
}
It is then easy to see that for any $X\in\mathfrak{X}_t(N^\ast)$ (recall that $\cos\theta_0=\frac14$), 
\eq{
    \int_{G_1(N^\ast)}{\rm div}_PX(x)\rd V(x,P)-\cos\theta_0\int_{G_1(\p N^\ast)}{\rm div}_PX(x)\rd W(x,P)=0,
}
which shows that $(V,W)$ is a $\theta_0$-stationary pair.

To see that $(V,W)$ is not a fine $\theta_0$-stationary pair, let us consider the multiplicity 2 varifold induced by $\p N^\ast\setminus\p\Om$, denoted by  $\tilde W$.
Clearly for any $X\in\mathfrak{X}_t(N^\ast)$, we have
\eq{\label{eq-Exam-3}
    \int_{\p N^\ast}{\rm div}_{\p N^\ast}X\rd\vvert W\vvert
    =-\int_{\p N^\ast}{\rm div}_{\p N^\ast}X\rd\vvert \tilde W\vvert,
}
and $(V,\tilde W)$ is a $(\pi-\theta_0)$-stationary pair.
It then follows from \eqref{eq-Exam-1}, \eqref{eq-Exam-2}, \eqref{eq-Exam-3} and Proposition \ref{Prop-MP21-3-1} that $\sigma_V$ in this case is given by $\frac{\sqrt{3}}{2}\mcH^0\llcorner\{p_1,p_2\}$.
However, notice that for any $X\in\mathfrak{X}_t(N^\ast)$,
\eq{
    \int_{G_1(\p N^\ast)}{\rm div}_PX(x)\rd \tilde W(x,P)
    =&\int_{\p N^\ast\setminus\p\Om}2{\rm div}_{\p N^\ast}X(x)\rd\mcH^1(x)\\
    =&2\left(\left<X(p_1),\bar\mu(p_1)\right>+\left<X(p_2),\bar\mu(p_2)\right>\right)\\
    \neq&\frac{1}{\frac{\sqrt{15}}{4}}\int_{\p N^\ast}\left<X,\bar\mu\right>\rd\sigma_V,
}
and hence $(V,W)$ is not a fine $\theta_0$-stationary pair. Clearly $\vvert V\vvert$ is supported in $\Om$, which completes the proof.
\end{proof}

%Given a fine $\theta$-stationary pair $(V,W)$,
%thanks to \eqref{defn-finepair-2}, we can rewrite \eqref{defn-finepair-1} as:
%\begin{align}\label{defn-finepair-3}
    %\int_{G_n(N^\ast)}{\rm div}_PX(x)\rd V(x,P)
    %=\frac{\cos\theta}{\sin\theta}\int_{\p N^\ast}\left<X,\tilde\mu\right>\rd\sigma_V-\int_{\p N^\ast}\tilde H\left<X,\nu_{N^\ast}\right>\rd\vvert V\vvert+\int_{\p N^\ast}\left<X,-\nu_{N^\ast}\right>\rd\sigma_V.
%\end{align}

%%%%%%%%%%%%%%%%%%%%%%%%%%%%%%%%%%%%%%%%%%%%%%%%%%%%%%%%%
\section{Proof of Theorem \ref{Thm-MP-FixedContactAngle}}\label{Sec-3}
As illustrated in Section \ref{Sec-1}, we need the following foliations, see \cite[Lemma 2.1]{LZ21} for the free boundary case.
To prove the following lemma, we will exploit the Fermi coordinate system at $p$.
For discussions on Fermi coordinate, see for example \cite[Section 6]{GLZ20} and \cite[Appendix A]{LZ21-a}.
\begin{lemma}\label{Lem-foliation}
For any properly embedded hypersurface $S$, having constant contact angle $\theta_0\in(0,\pi/2)$ with $\p N^\ast$, there exists a costant $\delta>0$; a neighborhood $U\subset N^\ast$ containing $p\in S\cap\p N^\ast$; and foliations $\left\{S_s\right\}$, $\left\{T_t\right\}$, with $s\in(-\delta,\delta), t\in(0,\delta)$, of $U$, $U\cap\Om$, respectively; such that $S_0=S\cap U$, and $S_s$ intersects $T_t$ orthogonally for every $s$ and $t$. In addition, each hypersurface $S_s$ meets $\p N^\ast$ with constant contact angle $\theta_0$. 
\end{lemma}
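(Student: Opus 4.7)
The plan is to work in a Fermi coordinate system at $p$ adapted to $\p N^\ast$ (and, within $\p N^\ast$, adapted to $\p S$), in order to translate the constant contact angle hypothesis into a pointwise first-order identity on a graph representation of $S$, from which both foliations can be read off. Concretely, I would choose Fermi coordinates $(y_1,\ldots,y_n,y_{n+1})$ at $p$ so that $\p N^\ast=\{y_{n+1}=0\}$, $y_{n+1}$ is the signed distance to $\p N^\ast$ (positive into $N^\ast$), and $(y_1,\ldots,y_n)$ are themselves Fermi coordinates on $\p N^\ast$ with respect to $\p S$; thus $\p S=\{y_n=0,\,y_{n+1}=0\}$, $y_n$ is the signed distance to $\p S$ within $\p N^\ast$ (positive into $T$), the ambient metric has the Fermi form $g=g_{ij}(y)\,dy^i dy^j+dy_{n+1}^2$, and at $\{y_{n+1}=0\}$ one has $g_{\alpha n}(y',y_n,0)=0$ for $\alpha<n$ and $g_{nn}(y',y_n,0)=1$.

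Representing $S$ locally as a graph $y_n=\phi(y',y_{n+1})$ with $\phi(y',0)=0$, a direct computation of $\nu_S$ using the block-diagonal form of the metric at $\{y_{n+1}=0\}$ shows that the constant contact angle hypothesis $\<\nu_S,\nu_{N^\ast}\>\equiv-\cos\theta_0$ along $\p S$ is equivalent to the pointwise identity $\p_{y_{n+1}}\phi(y',0)=\cot\theta_0$ for all $y'$; this is where the \emph{constancy} of the angle in the $y'$ directions is crucially used. Setting $v:=y_n-\phi(y',y_{n+1})$, I would define $S_s:=\{v=s\}\cap U$ for $s\in(-\delta,\delta)$; since $\p_{y_n}v\equiv 1$ these are smooth hypersurfaces foliating a neighborhood $U$ of $p$, with $S_0\cap U=S\cap U$. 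Repeating the same Fermi computation at any $q\in S_s\cap\p N^\ast=\{y_n=s,\,y_{n+1}=0\}$ yields $\nu_{S_s}|_q=\sin\theta_0\,\p_{y_n}-\cos\theta_0\,\p_{y_{n+1}}$, so $\<\nu_{S_s},\nu_{N^\ast}\>=-\cos\theta_0$ uniformly along $S_s\cap\p N^\ast$, and each leaf $S_s$ meets $\p N^\ast$ at constant contact angle $\theta_0$.

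For the orthogonal foliation, I would define $\tau$ on $U\cap\Om$ by setting $\tau(q)$ equal to the $y_n$-coordinate of the first intersection of the integral curve of $\nabla v$ through $q$ with $\p N^\ast$, and then put $T_t:=\{\tau=t\}$ for $t\in(0,\delta)$. Since $\nabla v$ is tangent to each $T_t$ by construction, $\nabla\tau$ is $g$-orthogonal to $\nabla v$, whence $\nu_{T_t}\perp\nu_{S_s}$ at every intersection—the required orthogonality. Along $\p N^\ast$ a direct computation gives $\nabla v|_{\p N^\ast}=\p_{y_n}-\cot\theta_0\,\p_{y_{n+1}}$, so for $\theta_0\in(0,\pi/2)$ the field $\nabla v$ is uniformly transverse to $\p N^\ast$; combined with the first-order description $\Om\approx\{0<y_{n+1}<y_n\tan\theta_0\}$ of the wedge near $p$ and a standard smooth-dependence argument for ODEs, this shows that the flow-to-boundary map is a smooth submersion of $U\cap\Om$ onto an open subset of $T=\{y_n>0\}$ adjacent to $\p S$, giving the foliation $\{T_t\}_{t\in(0,\delta)}$ of $U\cap\Om$.

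The step I expect to be the main obstacle is the limiting case $\theta_0=\pi/2$, where $\nabla v$ degenerates to $\p_{y_n}$ (tangent to $\p N^\ast$) and the flow-to-boundary definition of $\tau$ breaks down. To handle both cases uniformly I would instead take $T_t$ as the level sets of a function $u$ satisfying $u|_{\p N^\ast}=y_n\cos\theta_0$ and $\<\nabla u,\nabla v\>\equiv 0$, noting that the flat-model prescription $u=y_n\cos\theta_0+y_{n+1}\sin\theta_0$ works uniformly in $\theta_0\in(0,\pi/2]$ and reduces to the distance function $u=y_{n+1}$ in the free-boundary limit, thereby recovering the construction used by Li and Zhou in that case.
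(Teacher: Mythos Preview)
Your construction of the leaves $\{S_s\}$ is essentially the paper's: both set up Fermi coordinates with the normal coordinate to $\partial N^\ast$ together with the signed distance to $\partial S$ inside $\partial N^\ast$, write $S$ as a graph, and verify from the first--order identity (your $\partial_{y_{n+1}}\phi(y',0)=\cot\theta_0$) that translating the graph preserves the contact angle. The paper carries an extra correction term $x_1\varphi_s$ which, once one uses $g^{11}\equiv 1$ in Fermi coordinates, collapses to zero; your formulation is the cleaner one.

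The divergence, and the genuine gap, is in the orthogonal family $\{T_t\}$. Both you and the paper build $T_t$ out of integral curves of the normal field $e_{n+1}\parallel\nabla v$ to the leaves $S_s$; this is what forces $S_s\perp T_t$. The difference is the labeling of these curves. You index them by their hitting point on $\partial N^\ast$, which requires $\nabla v$ to be transverse to $\partial N^\ast$; as you correctly note, this fails at $\theta_0=\pi/2$. Your proposed uniform fix does not close the gap. In a general metric the explicit function $u=y_n\cos\theta_0+y_{n+1}\sin\theta_0$ does \emph{not} satisfy $\langle\nabla u,\nabla v\rangle=0$ away from $\{y_{n+1}=0\}$ (so its level sets are not orthogonal to the $S_s$), and the PDE characterization ``$u|_{\partial N^\ast}=y_n\cos\theta_0$ together with $\langle\nabla u,\nabla v\rangle=0$'' is a characteristic Cauchy problem precisely at $\theta_0=\pi/2$: the characteristics are the integral curves of $\nabla v$, and these are tangent to $\partial N^\ast$ when $\theta_0=\pi/2$, so the boundary data $u|_{\partial N^\ast}=0$ does not propagate into the interior.

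The repair is immediate and is exactly what the paper does: index the same integral curves by where they meet $S$ rather than $\partial N^\ast$. Concretely, let $\Gamma_t\subset S$ be the parallel hypersurface in $S$ at intrinsic distance $t$ from $\partial S$, and set $T_t$ to be the union of the integral curves of $e_{n+1}$ through $\Gamma_t$. Since $e_{n+1}$ is \emph{always} transverse to $S$ (it is its unit normal), this works for every $\theta_0\in(0,\pi/2]$, and one checks that for $t>0$ the leaves $T_t$ sweep out $U\cap\Omega$. In the flat model this parameter is exactly your $y_n\cos\theta_0+y_{n+1}\sin\theta_0$, so your heuristic was pointing at the right object; the point is to produce it by flowing to $S$, not to $\partial N^\ast$.
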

\begin{proof}
%{\color{purple}The case $\theta_0=\pi/2$ is addressed in \cite[Lemma 2.1]{LZ21}, so we consider $\theta_0\in(0,\pi/2)$ in the following.}
We first extend $S$ locally near $p$ to a foliation $\left\{S_s\right\}$ such that each $S_s$ meets $\p N^\ast$ with constant contact angle $\theta_0$. This can be done by a simple modification of \cite[Lemma 2.1]{LZ21}.

Let $(x_1,\ldots,x_{n+1})$ be a local Fermi coordinate system of $N^\ast$ centered at $p$, such that $x_1={\rm dist}_{N^\ast}(\cdot,\p N^\ast)$. Furthermore, we assume that $(x_2,\ldots,x_{n+1})$ is a local Fermi coordinate system of $\p N^\ast$, relative to the hypersurface $S\cap\p N^\ast$; that is, $x_{n+1}$ is the signed distance in $\p N^\ast$ from $S\cap\p N^\ast$.
%{\color{purple}By virtue of \cite[Lemma A.2]{LZ21-2}, we have,
%\begin{align}
    %&g_{11}\equiv1,\quad g_{1i}\equiv0,\quad\text{for }i=2,\ldots,n+1,\\
    %&g_{n+1,n+1}\mid_{\{x_1=0\}}\equiv1\quad,g_{n+1,i}\mid_{\{x_1=0\}}\equiv0,\quad\text{for }i=2,\ldots,n,\label{eq-LZ21a-A-2}
%\end{align}
%and hence ${\rm det}g={\rm det} g_2$, where $ g_2=g\mid_{\{x_2,\ldots,x_n\}}$.
%}

In the rest of this paper, we denote by $B_{r_0}^+=\{x_1^2+\ldots x_n^2<r_0^2\mid x_1\geq0,x_{n+1}=0\}$ the $n$-dimensional half ball in the Fermi coordinate.
Since $S$ meets $\p N^\ast$ with a constant contact angle $\theta_0\in(0,\pi)$, we can express $S$ in such local coordinates as the graph $x_{n+1}=f(x_1,\ldots,x_n)$ of a function $f$ defined on a half ball $B_{r_0}^+$, such that $f=0$ along $B^+_{r_0}\cap\left\{x_1=0\right\}$. Moreover, due to the contact angle condition, we can carry out the following computation, see also \cite[Section 7.1]{CM11} for a detailed computation of minimal graphs on manifolds.

First we fix some notations. Let $g_{ij}$ denote the metric on $N^\ast$ in the local Fermi coordinate $(x_1,\ldots,x_{n+1})$. Set $\bar e_i$ to be the vector field $\frac{\p}{\p x_i}$ so that $\left<\bar e_i,\bar e_j\right>=g_{ij}$.
For simplicity, we define a positive smooth function {\color{black}$W_f$} by
\eq{\label{defn-W_f}
    W_f^2(x_1,\ldots,x_{n})=g^{n+1,n+1}+\sum_{i,j=1}^ng^{ij}(x_1,\ldots,x_n,f(x_1\,\ldots,x_n))\frac{\p f}{\p x_i}\frac{\p f}{\p x_j}-2\sum_{l=1}^ng^{l,n+1}\frac{\p f}{\p x_l}.
}
%Note that on $\{x_1=0\}$, this reads
%\begin{align*}
    %W_f^2(0,x_2,\ldots,x_n)=1+g^{11}(0,x_2,\ldots,x_n,0)\left(\frac{\p f}{\p x_1}(0,x_2,\ldots,x_n)\right)^2,
%\end{align*}
%since 1. $x_{n+1}$ is the signed distance function in $\p N^\ast$ from $S\cap\p N^\ast$, which implies on $\{x_1=0\}$, one has $g_{n+1,n+1}=1,g_{n+1,i}=0$ for $i=1,\ldots,n$; 2. the other partial derivatives of $f$ vanish at this point thanks to the fact that $0=f(0,x_2,\ldots,x_n)$.
Now, let $\nu$ denote the outwards pointing unit normal of $S$,  
computing as \cite[(7.11)]{CM11}, we obtain
\eq{\label{eq-Lem1-1}
    \left<\nu,\bar e_i\right>=\frac{1}{{\color{black}W_f}}\frac{\p f}{\p x_i},\quad i=1,\ldots,n.
}
In particular, since $S$ meets $\p N^\ast$ with contact angle $\theta_0$, we have $\left<\nu,\bar e_1\right>=\cos\theta_0$ along $\left\{x_1=0\right\}$, and hence \eqref{eq-Lem1-1} yields
\eq{\label{eq-Lem1-2}
    \frac{\p f}{\p x_1}(0,x_2,\ldots,x_n)=\cos\theta_0 W_f(0,x_2,\ldots,x_n)\quad\text{on }\left\{x_1=0\right\}.
}
Note that on $N^\ast$ we have $g_{11}=1, g_{1k}=0$ for $k=2,\ldots,n+1$ since $x_1={\rm dist}_{N^\ast}(\cdot,\p N^\ast)$, and on $\{x_1=0\}\subset\p N^\ast$ we have $g_{n+1,n+1}=1,g_{n+1,l}=0$ for $l=2,\ldots,n$ since $x_{n+1}$ is the signed distance function in $\p N^\ast$ from $S\cap\p N^\ast$.
Recall also that $0=f(0,x_2,\ldots,x_n)$,
\eqref{defn-W_f} when restricted to $\{x_1=0\}$ thus reads
\eq{\label{eq-W_f}
    W^2_f(0,x_2\ldots,x_n)
    =1+\left(\frac{\p f}{\p x_1}(0,x_2,\ldots,x_n)\right)^2,
}
since the other partial derivatives of $f$ vanish.
Combining this with \eqref{eq-Lem1-2}, we get
\eq{
    \frac{\p f}{\p x_1}(0,x_2,\ldots,x_n)=\cot\theta_0,\quad W_f(0,x_2,\ldots,x_n)=\frac{1}{\sin\theta_0}.
}
In view of this, the translated graphs
$$x_{n+1}=f(x_1,\ldots,x_n)+s\eqqcolon f_s(x_1,\ldots,x_n)$$
then gives a local foliation $\left\{S_s\right\}$ near $p$,
and we can show that each leaf $S_s$ is a hypersurface in $N^\ast$ which meets $\p N^\ast$ with constant contact angle $\theta_0$ along its boundary $S_s\cap\p N^\ast$.
Indeed,
%for any point $q\in S\cap \p N^\ast$ near $p$, we can write it as $q=(0,x_2^q,\ldots,x_n^q,f(0,x_2^q,\ldots,x_n^q))$.
%By definition, $q_s:=(0,x_2^q,\ldots,x_n^q,f_s(0,x_2^q,\ldots,x_n^q))\in S_s\cap\p N^\ast$.
a direct computation gives that
\eq{
    \frac{\p f_s}{\p x_1}(0,x_2,\ldots,x_n)
    =&\frac{\p f}{\p x_1}(0,x_2,\ldots,x_n)
    =\cot\theta_0.
}
Since $f_s=s$ along $\{x_1=0\}$, we may argue as
\eqref{eq-W_f} to find that along $\{x_1=0\}$,
\eq{
    W_{f_s}^2(0,x_2,\ldots,x_n)
    =&1+\left(\frac{\p f_s}{\p x_1}(0,x_2,\ldots,x_n)\right)^2
    =\frac{1}{\sin^2\theta_0}.
}
It is then easy to see that along $\{x_1=0\}$,
\eq{
    \frac{\p f_s}{\p x_1}(0,x_2,\ldots,x_n)=\cos\theta_0 W_{f_s}(0,x_2,\ldots,x_n),
}
which implies that $S_s$ touches $\p N^\ast$ with constant contact angle $\theta_0$ according to \eqref{eq-Lem1-2}.

Next, the construction of $\left\{T_t\right\}$ which is orthogonal to every leaf of $\left\{S_s\right\}$ follows from \cite[Lemma 2.1]{LZ21}, we include the details here for readers' convenience.
Let $q\in N^\ast$ be a point near $p$ which lies on the leaf $S_s$.
We define $\nu(q)$ to be a unit vector normal to the hypersurface $S_s$.
By a continuous choice of $\nu$ it gives a smooth unit vector field in a neighborhood of $p$.
Since $\nu$ is nowhere vanishing near $p$, the integral curves of $\nu$ gives a local $1$-dimensional foliation of $N^\ast$ naer $p$.
The desired foliation $\{T_t\}$ is obtained by putting together these integral curves.
Precisely,
let $\Gamma_t\subset S$ be the parallel hypersurface in $S$ which is of distance $t>0$ away from $S\cap \p N^\ast$.
For $t\geq0$, set $T_{t}$ to be the union of all the integral curves of $\nu$ which pass through $\Gamma_{t}$.
%Here we note that in the original proof, Li-Zhou defined $T_t$ to be the union of all the integral curves of $\nu$ which pass through $\Gamma_t$,
%where $\nu(q)$ is defined to be the unit vector normal to the hypersurface $S_s$ at $q$ and $\Gamma_t\subset S$ is the parallel hypersurface in $S$ which is of distance $t>0$ away from $S\cap \p N^\ast$. In the free boundary case ($\theta_0=\pi/2$), such choice of $\left\{T_t\right\}_{t\geq0}$ indeed foliates a small neighborhood of $p$, since $T_0$ coincides with $T$ near $p$.
%However, for a generic angle $\theta_0$, $\{{T_t}\}_{t\geq0}$ apparently does not form a foliation near $p$.
%To overcome this
%(we consider for example, the case $\theta_0\in(\pi/2,\pi)$),
%{\color{black}Nevertheless, for $t\geq0$, we set $T_{t}$ to be the union of all the integral curves of $\nu:=e_{n+1}$ which passes through $\Gamma_{t}$}.
%for $t>0$, we set $T_{t}$ to be the union of all the integral curves of $\nu:=e_{n+1}$ which passes through $S_t\cap\p N^\ast$, see \cref{Fig-2} for an illustration.
%{\color{purple}Since $T_t$ is constructed separately for $t\leq0$ and $t>0$,
%to obtain a $C^1$ vector field to test the first variation formula (see {\bf Step 3.} in the proof of \cref{Thm-MP-capillary}),
%we have to  furthermore reindex $t$ for $t>0$ (to change the speed of $t\searrow0^+$) so that two pieces are $C^1$-glued}.
Then, by zooming in at $p$, we obtain a small $\delta>0$, and a small set $U\cap\Om$
%\footnote{We shall see that this will be sufficient for our construction of test vector field, since ${\rm spt}V\subset\Om$, to test the first variation of $V$, it suffice to consider the part of the vector field that lies in $\Om$.}
(see Fig. \ref{Fig-2} for illustration), which is indeed foliated by $\{T_t\}_{t\in(0,\delta)}$.
On the other hand, $\{S_s\}_{s\in(-\delta,\delta)}$ apparently foliates $U$.
This completes the proof. 
\end{proof}

The local orthogonal foliation in Lemma \ref{Lem-foliation} yields the following orthonormal frame of $\Omega$ near $p$, which is needed in our proof of Theorem \ref{Thm-MP-FixedContactAngle}.
\begin{lemma}[{\cite[Lemma 2.2]{LZ21}}]\label{Lem-ONF}
Let $\{e_1,\ldots,e_{n+1}\}$ be a local orthonormal frame of $\Omega$ near $p$, such that at each $q\in S_s\cap T_t$, $e_1(q)$ and $e_{n+1}(q)$ is normal to $S_s\cap T_t$ inside $S_s$ and $T_t$, respectively. In particular, we choose $e_{n+1}$ so that $e_{n+1}=\nu_S$ on $S_0$; $-e_1$ points into $N^\ast$ along $\p N^\ast$.
\end{lemma}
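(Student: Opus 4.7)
The plan is to read off the frame directly from the orthogonal foliations $\{S_s\}$ and $\{T_t\}$ furnished by \cref{Lem-foliation}. On the neighborhood $U$, I set $e_{n+1}$ to be the smooth unit normal vector field to the foliation $\{S_s\}$, with the sign fixed by $e_{n+1}|_{S_0}=\nu_S$. Since by construction each $T_t$ is a union of integral curves of $e_{n+1}$ through $\Gamma_t$, one has $e_{n+1}(q)\in T_qT_t$ automatically at every $q\in S_s\cap T_t$; combined with $e_{n+1}(q)\perp T_qS_s$, this shows that $e_{n+1}$ is normal to $S_s\cap T_t$ inside $T_t$, as required.

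Next, on $U\cap\Om$, let $e_1$ be a smooth unit normal vector field to the foliation $\{T_t\}_{t\in(0,\delta)}$. The orthogonality $S_s\perp T_t$ forces $e_1(q)\in T_qS_s$ and $e_1(q)\perp e_{n+1}(q)$, so $e_1$ is normal to $S_s\cap T_t$ inside $S_s$. I fix its orientation by demanding $\langle -e_1,\nu_{N^\ast}\rangle>0$ along $\p N^\ast\cap U$; this is well-defined because the leaves $T_t$ meet $\p N^\ast$ transversally near $p$ (thanks to the contact angle $\theta_0\in(0,\pi/2]$), so the continuous function $\langle e_1,\nu_{N^\ast}\rangle$ has a constant nonzero sign on $\p N^\ast\cap U$ after shrinking $U$ if necessary.

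To complete the frame I choose $e_2,\ldots,e_n$ as any smooth orthonormal basis of the $(n-1)$-plane distribution $T_q(S_s\cap T_t)=\mathrm{span}\{e_1(q),e_{n+1}(q)\}^\perp$ on $U\cap\Om$, for instance via Gram--Schmidt applied to the Fermi coordinate vectors $\p_{x_2},\ldots,\p_{x_n}$ (which are tangent to the intersection leaves in the chart of \cref{Lem-foliation}). The main subtlety is the transversality underwriting the sign of $e_1$ along $\p N^\ast$; once that is in hand, all the remaining properties of the frame are routine consequences of the smoothness and orthogonality supplied by \cref{Lem-foliation}.
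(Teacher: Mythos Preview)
The paper does not supply its own proof of this lemma; it simply records the statement and cites \cite[Lemma 2.2]{LZ21}. Your construction---taking $e_{n+1}$ as the unit normal to the leaves $S_s$, $e_1$ as the unit normal to the leaves $T_t$, and filling in $e_2,\ldots,e_n$ by Gram--Schmidt on the intersection distribution---is exactly the natural one, and is what the cited reference does. So your approach is correct and coincides with the intended argument.

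One minor comment on the orientation of $e_1$: your justification via ``the leaves $T_t$ meet $\p N^\ast$ transversally'' is a bit indirect, since in the construction of \cref{Lem-foliation} the foliation $\{T_t\}_{t\in(0,\delta)}$ is only asserted to cover $U\cap\Om$, and the leaves for $t>0$ need not actually touch $\p N^\ast$. A cleaner way to fix the sign is to note that along $S=S_0$ the vector $e_1$ is the unit normal in $S$ to the parallel hypersurfaces $\Gamma_t$, so one may simply choose $e_1$ to point in the direction of increasing $t$ (away from $\p S$); continuity then forces $\left<-e_1,\nu_{N^\ast}\right>=\sin\theta_0>0$ along $\p N^\ast$ near $p$, which is the condition you want. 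This is a cosmetic point and does not affect the validity of your argument.
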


\begin{proof}[Proof of Theorem \ref{Thm-MP-FixedContactAngle}]
%Notice that in terms of ${\bf i.}$, the proof has been carried out in \cite{LZZ21} and hence we prove for the case ${\bf ii.}$
As mentioned in the introduction, we want to construct a test vector field $X$, having its support arbitrarily close to $p$ in $\Om$.
This is done in the following manner.

\noindent{\bf Step 1. Constructing a hypersurface $S'$ in $N^\ast$, which touches $\Omega$ from outside up to second order at $p$.}

For every $\ep>0$ small, we define
\eq{
    \Gamma=\left\{x\in\p N^\ast:{\rm dist}_{\p N^\ast}(x,\p S)=\ep{\rm dist}_{\p N^\ast}(x,p)^4\right\},
}
which is an $(n-1)$-dimensional hypersurface in $\p N^\ast$ and is smooth in a neighborhood of $p$.
It has been proved in \cite[Section 3, Claim 1]{LZ21} that $\Gamma$ indeed touches $\p S$ from outside $T$ up to second order at $p$.

Now we extend $\Gamma$ to our desired hypersurface $S'$ in $N^\ast$. The construction is as follows. Let $(x_1,\ldots,x_{n+1})$ be a Fermi coordinate system centered at $p$ as in Lemma \ref{Lem-foliation} so that
\begin{enumerate}
    \item $\left\{x_1\geq0\right\}\subset N^\ast$,
    \item $\left\{x_{n+1}=f(x_1,\ldots,x_n)\geq0\right\}\subset S$,
    \item $\left\{x_{n+1}\geq f(x_1,\ldots,x_n)\right\}\subset \Omega$,
    \item 
    $\left\{x_1=x_{n+1}=0\right\}\subset\Gamma$.
\end{enumerate}
Then, we do a slight modification of the Fermi coordinate $(x_1,\ldots,x_{n+1})$ by further requiring $x_{n+1}$ to agree with the signed distance function from $\Gamma$ in $\p N^\ast$, and denote this coordinate by $(x_1,\tilde x_2,\ldots,\tilde x_{n+1})$, correspondingly, $S$ is expressed as the local graph $\tilde x_{n+1}=\tilde f(x_1,\tilde x_2,\ldots,\tilde x_n)$.
The fact that $\Gamma$ touches $\p S$ from outside $T$ at $p$ implies: $\tilde f(0,\tilde x_2,\ldots,\tilde x_n)\geq0$, with equality holds only at the origin.
%On the other hand, due to the contact angle condition of $S$, as shown in \cref{Lem-foliation}, we have $\frac{\p \tilde f}{\p x_1}=\cos\theta_0 {\color{purple}\tilde W_{\tilde f}}$ along $\left\{x_1=0\right\}$, here $\tilde W$ is the counterpart of $W$ in the new Fermi coordinate, and we denote by $\tilde g$ the metric in this new coordinate.

In this new Fermi coordinate, we can proceed our construction of $S'$.
Let ${\vec0}$ denote the origin of the Fermi coordinate chart centered at $p$,
we denote by $\tilde g$ the metric in this new coordinate, $\tilde W$ the counterpart of $W$ (given by \eqref{defn-W_f}) in this coordinate.
%and we define a smooth function $l$ in this new coordinate, given by
%\begin{align*}
    %l(x_2,\ldots,x_n)=\left[\cos^2\theta_0\left(1-\cos^2\theta_0\tilde g^{11}(0,x_2\ldots,x_n,0)\right)^{-1}\right]^{1/2}.
%\end{align*}
We set $S'$ to be the graph $\tilde x_{n+1}=u(x_1,\tilde x_2,\ldots,\tilde x_n)$ of the smooth function $u$, defined by
\eq{
    u(x_1,\tilde x_2,\ldots,\tilde x_n)=x_1\cot\theta_0
    +\frac{x_1^2}{2}\frac{\p^2 f}{\p x_1^2}(\vec 0)+\frac{x_1^3}{6}\left(\frac{\p^3f}{\p x_1^3}(\vec 0)-\epsilon\right).
}
It is clear that $u=0$
and $\frac{\p u}{\p x_1}=\cot\theta_0$ on $\{x_1=0\}$.
Since $0=u(0,\tilde x_2,\ldots,\tilde x_n)$, as computed in \eqref{eq-W_f}, we have
\eq{
    \tilde W_u^2(0,\tilde x_2\ldots,\tilde x_n)
    =&1+\left(\frac{\p u}{\p x_1}(0,\tilde x_2,\ldots,\tilde x_n)\right)^2=\frac{1}{\sin^2\theta_0},
}
and it is easy to see that on $\{x_1=0\}$,
\eq{
    \frac{\p u}{\p x_1}(0,\tilde x_2,\ldots,\tilde x_n)
    =\cos\theta_0\tilde W_u(0,\tilde x_2,\ldots,\tilde x_n).
}

These facts imply: 1. $S'$ is an extension of $\Gamma$; 2. $S'$ meets $\p N^\ast$ with constant contact angle $\theta_0$, due to \eqref{eq-Lem1-2}.
By \cite[Claim 1]{LZ21}, we know that all the partial derivatives (with respect to the coordinates $x_1,\tilde x_2,\ldots,\tilde x_n$) of $u$ and $\tilde f$ agree up to second-order at $\vec 0$, and for sufficiently small $\epsilon$, $\tilde f\geq u$ everywhere in a neighborhood of $p$ with equality holds only at the origin; that is to say, $S'$ touches $\Om$ from outside up to second-order at $p$.
%Now we show that $S'$ touches $\Omega$ from outside up to second order at $p$. First, by a direct computation, we observe that all the partial derivatives with respect to $x_1,\ldots,x_n$, of $u$ and $f$ at $\vec{0}$, agree up to second order. On the other hand, for a fixed small number $\ep$ to be specified later, we see from the Taylor expansion of $f$ at $x_1=0$
%\begin{align*}
    %f\mid_{(x_1,x_2,\ldots,x_n)}
    %=&x_1\cos\theta_0{\color{purple}W_f}(0,x_2,\ldots,x_n)
    %+\frac{x_1^2}{2}\frac{\p^2 f}{\p x_1^2}(0,x_2,\ldots,x_n)
    %+\frac{x_1^3}{6}\frac{\p^3f}{\p x_1^3}(0,x_2,\ldots,x_n)+o( x_1^3)
%\end{align*}
%that, $f\geq u$ everywhere in a small enough neighborhood of $p$ in $B^+_{r_0}$, with equality holds only at the origin. These facts have showed that $S'$ touches $\Omega$ from outside up to second order at $p$, thus finish the first step of our proof.

\noindent{\bf Step 2. Constructing the test vector field $X$, which decreases the first variation of $V$ strictly.}

In {\bf Step 1}, we constructed a hypersurface $S'$, meeting $\p N^\ast$ with constant contact angle $\theta_0$, and hence we can use Lemma \ref{Lem-foliation} to obtain local foliations $\{S'_s\}$ and $\{T_t'\}$. We define smooth functions $s,t$ in a neighborhood of $p$, so that $s(q)$ is the unique $s$ such that $q\in S'_s$.
Recall that $s\geq0$ on $\Omega$.

{\bf Claim. }
%{\color{purple}In $\overline\Omega$, we have}
$\nabla s=\psi e_{n+1}$ for some smooth function $\psi$ such that $\psi\geq c$ near $p$ for some positive constant $c$.
Here $\{e_1,\ldots,e_{n+1}\}$ is a local orthonormal frame
%of $N^\ast$
near $p$, as in Lemma \ref{Lem-ONF}.
\begin{proof}[Proof of {\bf Claim}]
Since $s$ is a constant on each leaf $S'_s$, we have that $\nabla s$ is normal to $S'_s$.
It follows from the definition of $e_{n+1}$ that $\nabla s=\psi_1e_{n+1}$,
where $\psi_1$ is smooth in $U\cap\Om$.

By continuity, we find that $\psi\geq\psi(p):=c$ near $p$ (without loss of generality, we may assume that $c=\frac{1}{2}$, otherwise we substitue $s$ by $\frac{\psi(p)}{2}s$).
\end{proof}
%Note that at $p$, t(p)=0$.
%namely, the directional derivatives along $e_1,\ldots,e_{n+1}$ are monotone around $p$, thanks to the fact that $S'$ touches $\Omega$ at $p$ from outside, we have $\{-\epsilon<\varphi<\epsilon\}\cap\Omega$ is close to $p$ as long as $\epsilon$ is chosen small enough.
Now we define the test vector field $X$ on $N^\ast$ near $p$ by
\eq{\label{def-X}
    X(q)=\phi\left(s(q)\right)(-e_{n+1}(q)),
}
where $\phi(s)$ is the cut-off function defined by
\eq{\label{def-phi}
    \phi(s)=
    \begin{cases}
    {\rm exp}(\frac{1}{s-\epsilon}),
    \quad&0\leq s<\epsilon,\\
    0,&s\geq\epsilon.
    \end{cases}
}
Note that the construction of $X$ is sufficient for our purpose due to Remark \ref{Rem-1st-variation}.
A direction computation then gives, for $0\leq s<\epsilon$, it holds
\eq{
    \frac{\phi'(s)}{\phi(s)}=-\frac{1}{(s-\epsilon)^2}\leq\frac{-1}{\epsilon^2},
}
and hence for any $s\geq0$, we have
\eq{\label{ineq-phi'}
    \phi'(s)\leq-\frac{\phi(s)}{\epsilon^2}.
}
Since $S'$ touches $\Omega$ from outside, we have $s\geq0$ on $\Omega$, and ${\rm spt}(\phi)\cap \Om$ will be close to $p$ as long as $\epsilon$ is small. Thus, if we choose $\epsilon$ to be small enough, then in $\Omega$, our test vector field $X$ will have compact support near $p$. 
Moreover, since $-e_{n+1}(q)$ points into $N^\ast$ for all $q\in\p N^\ast$, we have that $X\in\mathfrak{X}(N^\ast)$. This finishes the construction of our test vector field $X$.

\noindent{\bf Step 3. Testing the first variation by $X$.}

At each $q\in\Om$ that is close to $p$, we consider the bilinear form on $T_qN^\ast$ defined by
\eq{
    Q(u,v)=\left<\nabla_uX,v\right>(q).
}
Let $\{e_1,\ldots,e_{n+1}\}$ be a local orthonormal frame near $p$ as in Lemma \ref{Lem-ONF}.
As computed in \cite[(3.1)]{LZ21}, the bilinear form $Q$ \footnote{Notice that our choice of vector field \eqref{def-X} agrees with the one in \cite{LZ21}, up to a different sign.} can be expressed in this frame by the following matrix
\eq{
    Q=
    \begin{bmatrix}
    \phi A_{11}^{S'_s}&-\phi A^{T_t}_{n+1,j}&0\\
    -\phi A^{T_t}_{i,n+1}&\phi A^{S'_s}_{ij}&0\\
    -\phi A^{T_t}_{n+1,n+1}&-\phi\left<\nabla_{e_{n+1}}e_{n+1},e_j\right>&-\phi'\psi_1
    \end{bmatrix}
}
where $i,j=2,\ldots,n$, and $q\in S'_s\cap T_t$.

Using \eqref{ineq-phi'} and the strictly mean convexity of $S$ at $p$, one finds as in \cite[Lemma 3.2, Lemma 3.3]{LZ21}:
for $\epsilon>0$ small enough, there holds: ${\rm tr}_PQ>0$ for all $n$-dimensional subspaces $P\subset T_qN^\ast$.

Since $(V,W)$ is a fine $\theta$-stationary pair and $V$ is supported in $\Om$, $W$ is supported in $T$, we can use $X$ to test the first variation formula
\eqref{defn-finepair-3} to find:
\eq{
    \int_{G_n(N^\ast)}&{\rm tr}_PQ(q)\rd V(q,P)
    =\frac{\cos\theta_0}{\sin\theta_0}\int_{\p N^\ast}\phi(s(q))\left<-e_{n+1},-\tilde\mu\right>(q)\rd\sigma_V(q)\\
    -&\int_{\p N^\ast}\tilde H(q)\phi(s(q))\left<-e_{n+1},\nu_{N^\ast}\right>(q)\rd\vvert V\vvert(q)+\int_{\p N^\ast}\phi(s(q))\left<-e_{n+1},-\nu_{N^\ast}\right>(q)\rd\sigma_V(q).
}
Recall that $S_s'$ meets $\p N^\ast$ with constant contact angle $\theta_0$, so that
at every $q\in\p N^\ast$, there exists a unit vector $-\bar\mu(q)\in T_q\p N^\ast$ such that
\eq{
    -e_{n+1}(q)=\sin\theta_0(-\bar\mu(q))+\cos\theta_0\nu_{N^\ast}(q),
}
which implies
\eq{\label{eq-tr_PQ}
    \int_{G_n(N^\ast)}&{\rm tr}_PQ(q)\rd V(q,P)
    =\cos\theta_0\int_{\p N^\ast}\phi(s(q))\left<\bar\mu,\tilde\mu\right>(q)\rd\sigma_V(q)\\
    -&\cos\theta_0\int_{\p N^\ast}\phi(s(q))\rd\sigma_V(q)-\cos\theta_0\int_{\p N^\ast}\tilde H(q)\phi(s(q))\rd\vvert V\vvert(q).
}

\noindent{\bf Conclusion of the proof.}

Recall that as we choose $\epsilon$ small enough, in $\Omega$ our test vector field $X$ will have compact support close to $p$, and hence we see from {\bf Step 3} that
\eq{\label{ineq-firstvariation}
    \delta V[X]=\int_{G_n(N^\ast)}{\rm div}_PX(x)\rd V(x,P)
    =\int_{G_n(N^\ast)}{\rm tr}_PQ(q)\rd V(q,P)
    >0,
}
since for any $n$-dimensional affine subspace $P\in T_q N^\ast$, ${\rm div}_PX(q)={\rm tr}_PQ(q)>0$.

Let us check the sign of the RHS of \eqref{eq-tr_PQ}.
By virtue of the condition $\theta_0\in(0,\pi/2)$, the fact that $\phi(s(q))\geq0$ on $\{s\geq0\}$, and the fact that $\left<\bar\mu(q),\tilde\mu(q)\right>\leq1$ locally near $p$, we immediately deduce
\eq{
\int_{G_n(N^\ast)}{\rm tr}_PQ(q)\rd V(q,P)
\leq-\cos\theta_0\int_{\p N^\ast}\tilde H(q)\varphi(s(q))\rd\vvert V\vvert(q).
}
For both case {\bf i} and {\bf ii} of Theorem \ref{Thm-MP-FixedContactAngle}, we readily see that $\int_{\p N^\ast}\tilde H(q)\varphi(s(q))\rd\vvert V\vvert(q)\geq0$, and thanks to $\theta_0\in(0,\pi/2)$ again,
\eq{
    \int_{G_n(N^\ast)}{\rm tr}_PQ(q)\rd V(q,P)\leq0.
}
However, this contradicts to \eqref{ineq-firstvariation} and completes the proof.
\end{proof}
%-----------
%\section{Competing Interests Declaration}

%Data sharing not applicable to this article as no datasets were generated or analysed during the current study.

%\

%{\bf Conflict of interest.} The author states that there is no conflict of
%interest.

%\ 

%The author has no relevant financial or non-financial interests to disclose.

%\

%--------------------

%------------------
\appendix
\section{Examples of Stationary Pairs with Contact Angle Condition}
In the appendix, we adopt the following convention: for a smooth curve $C$ in $\mfR^2$ and a positive constant $\alpha>0$, we denote by ${\rm var}(C,\alpha)$ the multiplicity $\alpha$, 1-varifold induced by $C$.
The shorthand ${\rm var}(C)$ is used when $\alpha=1$.

As mentioned before, the following example is provided by Gaoming Wang, we record it here for readers' interest.
\begin{example}\label{Exam-A1}
Let $N^\ast$ be an open subset of an upper half-plane in $\mfR^2$ and let $S$ be a mean convex curve in $N^\ast$ to form a domain $\Om$ such that $S$ has contact angle $\theta_0=\frac{2\pi}{3}$. Let $p=S\cap\p N^\ast$, $W={\rm var}(\p N^\ast\cap\p\Om)$.
Choose a ray $R$ such that it has contact angle $\varphi_0$ satisfying $\cos\varphi_0=-\frac14$ and let $V={\rm var}(R\cap\Om,2)$.
Then $(V,W)$ is a $\varphi_0$-stationary pair supported in $\Om$ while the barrier $S$ has contact angle $\theta_0>\varphi_0$.
See Fig. \ref{Fig-example-1}.
\end{example}
%-------------
\begin{figure}[H]
	\centering
	\includegraphics[height=5cm,width=9cm]{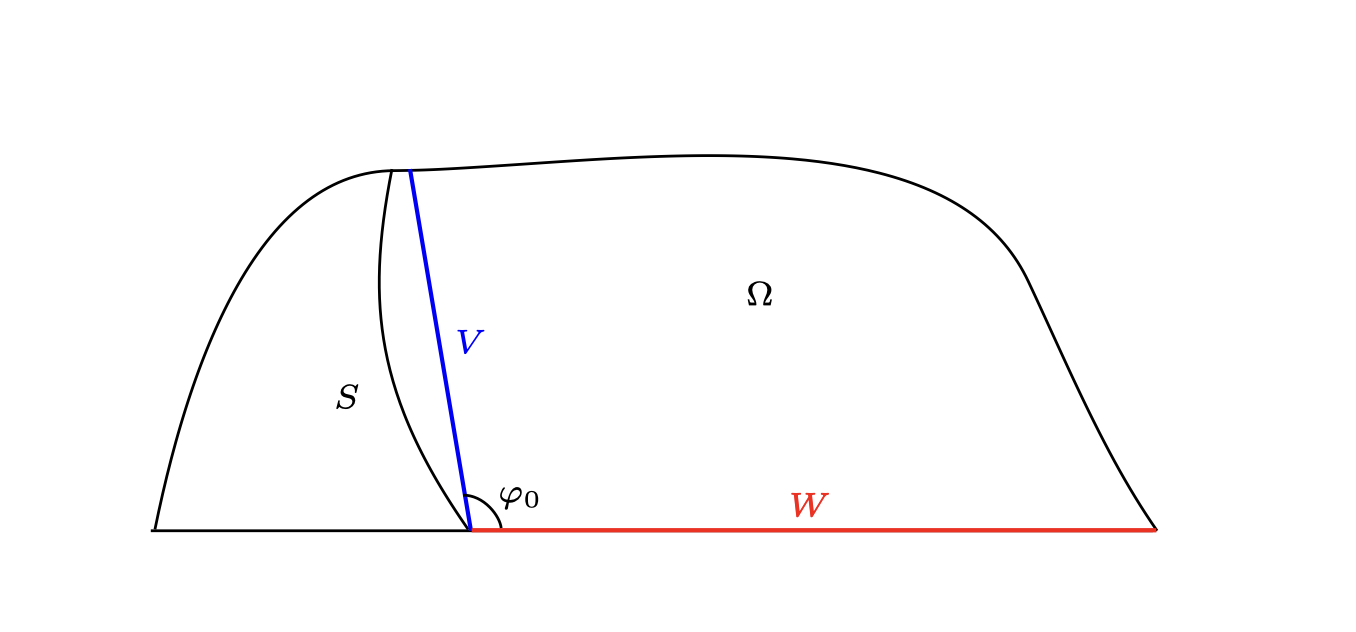}
	\caption{Example A.1.}
	\label{Fig-example-1}
\end{figure}
%-------------

We end this appendix by introducing a way to construct fine stationary pairs of varifolds $(V,W)$, the case that $\vvert V\vvert(\p N^\ast)>0$ is included.
We may construct as many as possible examples if we like, but the best situation (in the sense that the barrier has the same contact angle as the fine stationary pair does) seems to be obtained when $\vvert V\vvert(\p N^\ast)=0$.
\begin{example}
Let $N^\ast$ be the unit ball in the plane $\mfR^2$ centered at the origin, $\p N^\ast$ is then the unit sphere.
Let $p_1,p_2$ be two points on $\p N^\ast$ such that the line segment joining them, say $L$, has contact angle $\theta_0\in(0,\pi)\setminus\{\frac{\pi}{2}\}$ with $\p N^\ast$, and the enclosed domain is denoted by $\Om$ (see Fig. \ref{Fig-example-2}).
For $\alpha,\beta\geq0$ to be specified latter, define $V={\rm var}(L)+{\rm var}(\p N^\ast\cap\p\Om,\alpha), W={\rm var}(\p N^\ast\cap\p\Om,\beta)$.

Then for $0<\gamma\leq\theta_0$ and $\gamma\neq\frac{\pi}{2}$, the relations
\begin{equation}
\label{condi-gamma-theta}
\begin{split}
        \alpha=&-\cos\theta_0+\sin\theta_0\frac{\cos\gamma}{\sin\gamma},\\
        \beta=&\frac{\sin\theta_0}{\sin\gamma}
\end{split}
\end{equation}
define a fine $\gamma$-stationary pair $(V,W)$.
Moreover, as $\gamma=\theta_0$, one must have $\alpha=0$, $\beta=1$. 
\end{example}

%-------------
\begin{figure}[H]
	\centering
	\includegraphics[height=8cm,width=15cm]{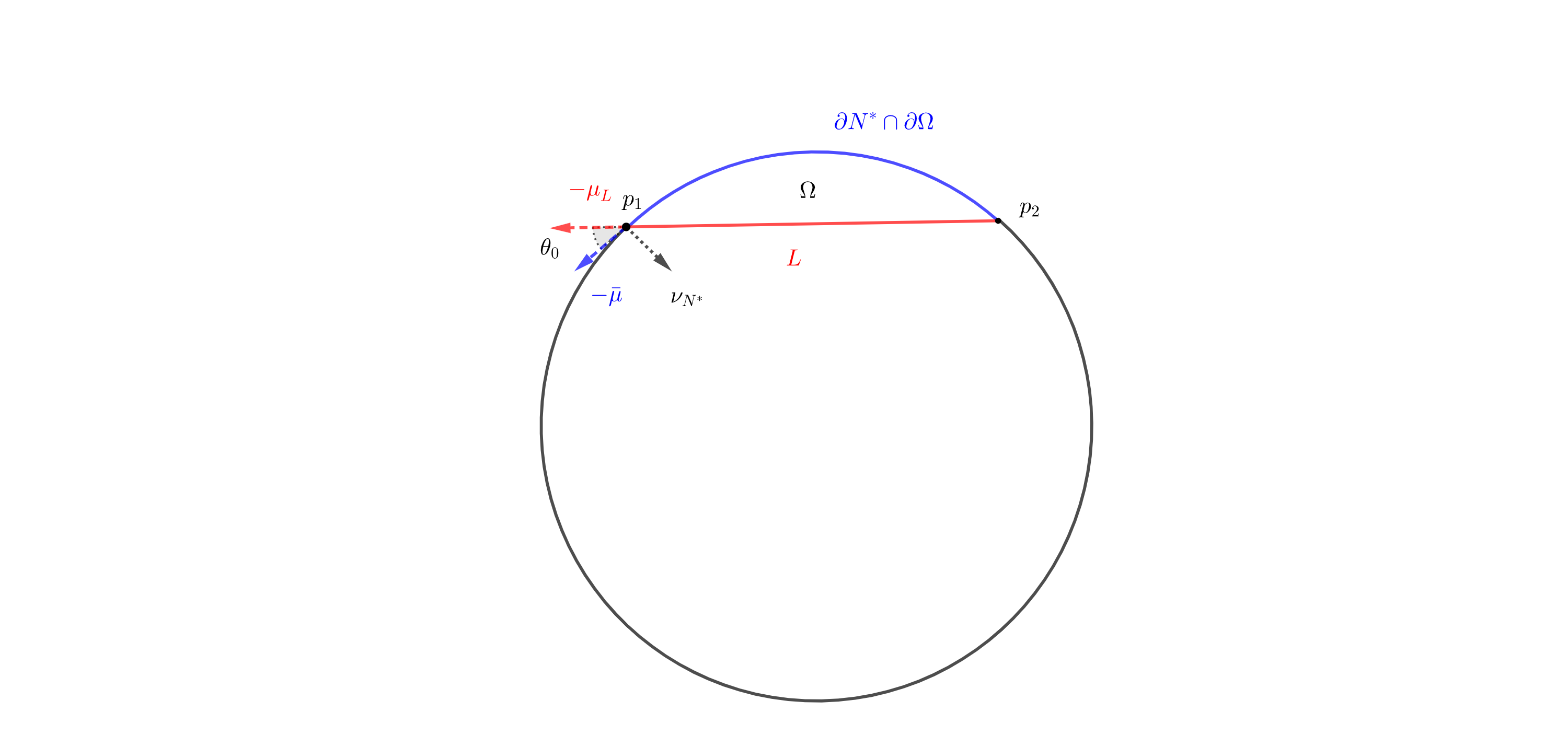}
	\caption{Example A.2.}
	\label{Fig-example-2}
\end{figure}
%-------------

\begin{proof}
For any $X\in\mathfrak{X}(N^\ast)$, a direct computation shows
\eq{
    \delta V[X]
    &=\int_L{\rm div}_LX(x)\rd\mcH^1(x)+\alpha\int_{\p N^\ast\cap\p\Om}{\rm div}_{\p N^\ast}X(x)\rd\mcH^1(x)\\
    &=\sum_{i=1}^2\left<X(p_i),-\mu_L(p_i)\right>+\alpha\sum_{i=1}^2\left<X(p_i),-\bar\mu(p_i)\right>-\alpha\int_{\p N^\ast\cap\p\Om}\left<X(x),\tilde \mfH(x)\right>\rd\mcH^1(x)\\
    =&(\alpha+\cos\theta_0)\sum_{i=1}^2\left<X(p_i),-\bar\mu(p_i)\right>+\sin\theta_0\sum_{i=1}^2\left<X(p_i),-\nu_{N^\ast}(p_i)\right>-\int_{\p N^\ast}\left<X(x),\tilde \mfH(x)\right>\rd\vvert V\vvert(x),
}
where we have used the fact that $-\mu_L(p_i)=\cos\theta_0(-\bar\mu(p_i))+\sin\theta_0(-\nu_{N^\ast}(p_i))$.

Similarly, for the same $X$, one has
\eq{
    \delta W[X]
    =\beta\int_{\p N^\ast\cap\p\Om}{\rm div}_{\p N^\ast}X(x)\rd\mcH^1(x)
    =\beta\sum_{i=1}^2\left<X(p_i),-\bar\mu(p_i)\right>-\int_{\p N^\ast}\left<X(x),\tilde \mfH(x)\right>\rd\vvert W\vvert(x),
}
it follows that
\eq{
    &\delta V[X]-\cos\gamma\delta W[X]\\
    =&(\alpha+\cos\theta_0-\beta\cos\gamma)\sum_{i=1}^2\left<X(p_i),-\bar\mu(p_i)\right>+\sin\theta_0\sum_{i=1}^2\left<X(p_i),-\nu_{N^\ast}(p_i)\right>\\
    &-\int_{\p N^\ast}\left<X(x),\tilde H(x)\right>\rd\left(\vvert V\vvert-\cos\gamma\vvert W\vvert\right)(x),
}
thus to make $(V,W)$ a $\gamma$-stationary pair, we have to require that
\eq{\label{condi-alpha-beta}
    \alpha+\cos\theta_0-\beta\cos\gamma=0,
}
in which case (compared to Proposition \ref{Prop-MP21-3-1}) $\sigma_V$ is given by $\sin\theta_0\mcH^0\llcorner\{p_1,p_2\}$.
As shown above, for any $X\in\mathfrak{X}_t(N^\ast)$, we have
\eq{
    \delta W[X]=\beta\sum_{i=1}^2\left<X(p_i),-\bar\mu(p_i)\right>,
}
and hence to make $(V,W)$ a fine $\gamma$-stationary pair, we have to
further require that
\eq{
    \beta=\frac{\sin\theta_0}{\sin\gamma}.
}
This, together with \eqref{condi-alpha-beta}, yields \eqref{condi-gamma-theta}.

 From \eqref{condi-gamma-theta}, we observe that $\alpha\geq0$ is in fact equivalent to $\sin(\theta_0-\gamma)\geq0$, that is, $0<\gamma\leq\theta_0<\pi$, and the equality occurs if and only if $\alpha=0,\beta=1$, which completes the proof.
\end{proof}
As a specific choice, if we choose $\theta_0=\frac{\pi}{3},\gamma=\frac{\pi}{6}, \alpha=1, \beta=\sqrt{3}$, then we obtain a  
fine $\frac{\pi}{6}$-stationary pair $(V,W)$, with $\vvert V\vvert(\p N^\ast)>0$.

%%%%%%%%%%%%%%%%%%%%%%%%%%%%%%%%%%%%%%%%%%%%%%%%%
%%%%%%%%%%%%%%%%%%%%%%%%%%%%%%%%%%%%%%%%%%%%%%%%%%%%%%%%%
%==========
\bibliographystyle{alpha}
\bibliography{BibTemplate.bib}

\begin{thebibliography}{GLZ20}

\bibitem[All72]{Allard72}
William~K. Allard.
\newblock On the first variation of a varifold.
\newblock {\em Ann. Math. (2)}, 95:417--491, 1972.
\newblock doi: \href{https://doi.org/10.2307/1970868}{10.2307/1970868}.

\bibitem[CM11]{CM11}
Tobias~Holck Colding and William P.~II Minicozzi.
\newblock {\em A course in minimal surfaces}, volume 121 of {\em Grad. Stud.
  Math.}
\newblock Providence, RI: American Mathematical Society (AMS), 2011.

\bibitem[DD21]{MP21}
Luigi {De Masi} and Guido {De Philippis}.
\newblock Min-max construction of minimal surfaces with a fixed angle at the
  boundary, 2021.
\newblock \href{https://arxiv.org/abs/2111.09913}{arXiv:2111.09913}.

\bibitem[DM21]{DeMasi21}
Luigi De~Masi.
\newblock Rectifiability of the free boundary for varifolds.
\newblock {\em Indiana Univ. Math. J.}, 70(6):2603--2651, 2021.
\newblock doi:
  \href{https://doi.org/10.1512/iumj.2021.70.9401}{10.1512/iumj.2021.70.9401}.

\bibitem[DM22]{DeMasi22}
Luigi De~Masi.
\newblock Existence and properties of minimal surfaces and varifolds with
  contact angle conditions, 2022.
\newblock Thesis (Ph.D.)--SISSA,
  \href{http://hdl.handle.net/20.500.11767/129590}{url}.

\bibitem[GLZ20]{GLZ20}
Qiang Guang, Martin Man-chun Li, and Xin Zhou.
\newblock Curvature estimates for stable free boundary minimal hypersurfaces.
\newblock {\em J. Reine Angew. Math.}, 759:245--264, 2020.
\newblock doi:
  \href{https://doi.org/10.1515/crelle-2018-0008}{10.1515/crelle-2018-0008}.

\bibitem[GT01]{GT01}
David Gilbarg and Neil~S. Trudinger.
\newblock {\em Elliptic partial differential equations of second order}.
\newblock Class. Math. Berlin: Springer, reprint of the 1998 ed. edition, 2001.

\bibitem[JT03]{JT03}
Luqu{\'e}sio~P. Jorge and Friedrich Tomi.
\newblock The barrier principle for minimal submanifolds of arbitrary
  codimension.
\newblock {\em Ann. Global Anal. Geom.}, 24(3):261--267, 2003.
\newblock doi:
  \href{https://doi.org/10.1023/A:1024791501324}{10.1023/A:1024791501324}.

\bibitem[KT17]{KT17}
Takashi Kagaya and Yoshihiro Tonegawa.
\newblock A fixed contact angle condition for varifolds.
\newblock {\em Hiroshima Math. J.}, 47(2):139--153, 2017.
\newblock doi:
  \href{https://doi.org/10.32917/hmj/1499392823}{10.32917/hmj/1499392823}.

\bibitem[LZ21a]{LZ21}
Martin Man-chun Li and Xin Zhou.
\newblock A maximum principle for free boundary minimal varieties of arbitrary
  codimension.
\newblock {\em Commun. Anal. Geom.}, 29(6):1509--1521, 2021.
\newblock doi:
  \href{https://doi.org/10.4310/CAG.2021.v29.n6.a7}{10.4310/CAG.2021.v29.n6.a7}.

\bibitem[LZ21b]{LZ21-a}
Martin Man-Chun Li and Xin Zhou.
\newblock Min-max theory for free boundary minimal hypersurfaces. {I}:
  {Regularity} theory.
\newblock {\em J. Differ. Geom.}, 118(3):487--553, 2021.
\newblock doi:
  \href{https://doi.org/10.4310/jdg/1625860624}{10.4310/jdg/1625860624}.

\bibitem[LZZ21]{LZZ21}
Chao {Li}, Xin {Zhou}, and Jonathan~J. {Zhu}.
\newblock Min-max theory for capillary surfaces, 2021.
\newblock \href{https://arxiv.org/abs/2111.09924}{arXiv:2111.0992}.

\bibitem[Pit81]{Pitts81}
Jon~T. Pitts.
\newblock {\em Existence and regularity of minimal surfaces on {Riemannian}
  manifolds}, volume~27 of {\em Math. Notes (Princeton)}.
\newblock Princeton University Press, Princeton, NJ, 1981.

\bibitem[Sim83]{Sim83}
Leon Simon.
\newblock {\em Lectures on geometric measure theory}, volume~3 of {\em Proc.
  Cent. Math. Anal. Aust. Natl. Univ.}
\newblock Australian National University, Centre for Mathematical Analysis,
  Canberra, 1983.

\bibitem[Whi10]{White10}
Brian White.
\newblock The maximum principle for minimal varieties of arbitrary codimension.
\newblock {\em Commun. Anal. Geom.}, 18(3):421--432, 2010.
\newblock doi:
  \href{https://doi.org/10.4310/CAG.2010.v18.n3.a1}{10.4310/CAG.2010.v18.n3.a1}.

\bibitem[XZ23]{XZ23}
Chao Xia and Xuwen Zhang.
\newblock Alexandrov-type theorem for singular capillary cmc hypersurfaces in
  the half-space, 2023.
\newblock \href{https://arxiv.org/abs/2304.01735}{arXiv:2304.01735}.

\bibitem[Zha22]{Zhang22}
Xuwen Zhang.
\newblock A maximum principle for codimension-1 stationary varifolds under
  fixed contact angle condition, 2022.
\newblock \href{https://arxiv.org/abs/2205.07643v4}{arXiv:2205.07643v4}.

\end{thebibliography}

%%%%%%%%%%%%%%%%%%%%%%%%%%%%%%%%%%%%%%%%%%%%%%%%%%%%%%%%%

\end{document}